\newtheorem{theorem}{Theorem}[section]
\newtheorem{lemma}[theorem]{Lemma}
\newtheorem{proposition}[theorem]{Proposition}
\theoremstyle{definition}
\newtheorem{remark}[theorem]{Remark}
\newtheorem{assumption}{Assumption}
\title[Synchronization in random networks]{Synchronization in random networks of identical phase oscillators: A graphon approach}
\author{Shriya V.~Nagpal}
\author{Gokul G.~Nair}
\author{Steven H.~Strogatz}
\author{Francesca Parise}
\address{Mathematics, Pitzer College, 1050 N Mills Ave, Claremont, CA 91711}
\email{shriyn@pitzer.edu}
\address{Department of Mathematics, Rutgers University, 110 Frelinghuysen Road, Piscataway, NJ 08854}
\email{gokul.nair@rutgers.edu}
\address{Center for Applied Mathematics, Cornell University, Ithaca, NY 14853}
\email{svn23@cornell.edu}
\email{gn234@cornell.edu}
\email{shs7@cornell.edu}
\address{School of Electrical and Computer Engineering, Cornell University, Ithaca, NY 14853}
\email{fp264@cornell.edu}
\keywords{Graphons, Random graph, Synchronization, Kuramoto model, Continuum limit}
\subjclass{34C15, 34D06, 45K05, 05C80}
\begin{document}

\begin{abstract}
    Networks of coupled nonlinear oscillators have been used to model circadian rhythms, flashing fireflies, Josephson junction arrays, high-voltage electric grids,  and many other kinds of self-organizing systems. Recently, several authors have sought to understand how coupled oscillators behave when they interact according to a random graph. Here we consider interaction networks generated by a graphon model known as a $W$-random network, and examine the dynamics of an infinite number of identical phase oscillators. We show that with sufficient regularity on $W$, the solution to the dynamical system over a $W$-random network of size $n$ converges in the $L^{\infty}$ norm to the solution of the infinite graphon system, with high probability as $n\rightarrow\infty$. We leverage this convergence result to explore synchronization for two classes of identical phase oscillators on Erd\H{o}s-R\'enyi random graphs. This result suggests a framework for studying synchronization properties in large but finite random networks. 
\end{abstract}

\maketitle

\section{Introduction}
Networks of phase oscillators have received a great deal of attention recently, in part because of their many applications in physics, biology, chemistry, and engineering, and also because of the fascinating mathematical issues they raise about spontaneous synchronization, chimera states, and other forms of collective behavior~\cite{Acebron2005, parastesh2021chimeras, Pikovsky2001, Rodrigues2016, Strogatz2003, strogatz2018nonlinear}. 

Two of the best-studied examples of phase oscillator models are the Kuramoto model~\cite{Acebron2005,Kuramoto1975,Kuramoto1984} and the Sakaguchi-Kuramoto model~\cite{sakaguchi1986solubl}. To describe each, let $A^{(n)}\in\mathbb{R}^{n\times{n}}$ be the adjacency matrix associated with an unweighted and undirected network on $n$ nodes, $G=(V,E)$, where $A^{(n)}_{ij}=A^{(n)}_{ji}=1$ if and only if $(i,j)\in{E}$, and $A^{(n)}_{ij}=A^{(n)}_{ji}=0$ otherwise. In the \textit{Kuramoto model}, the state of each node $i\in{V}$ is given by a phase angle $\theta_i(t)$ that evolves according to the following system of ordinary differential equations:
$$
\dot{\theta_{i}}(t)=\nu_i + \sum_{j=1}^n A^{(n)}_{ij} \sin \left(\theta_j(t)-\theta_i(t)\right)
$$

\noindent for $i=1,\ldots, n$. Here, the overdot denotes differentiation with respect to time $t$, and $\nu_i$ is the natural frequency of oscillator $i$. 
The \textit{Sakaguchi-Kuramoto model} extends the Kuramoto model by introducing a \textit{phase shift parameter}, $0<\beta<\frac{\pi}{2}$. The governing equations become: 
\[
\dot{\theta_{i}}(t)=\nu_i + \sum_{j=1}^n A^{(n)}_{ij} \sin \left(\theta_j(t)-\theta_i(t) + \beta\right)
\]
\noindent for $i=1,\ldots, n$. 
For both models, a main question is whether the oscillators will synchronize or settle into some other form of long-term behavior. 

Because the Kuramoto model was originally inspired by statistical physics, most of the early work on it assumed that the interaction network was a structured lattice, such as a one-dimensional chain or ring, a two-dimensional square grid, or a cubic lattice of dimension three or higher~\cite{sakaguchi1987local, sakaguchi1988mutual, strogatz1988phase}. In those settings, the natural frequencies $\nu_i$ were usually assumed to be randomly distributed across the nodes according to some prescribed probability distribution, and the main question was whether the system would undergo a phase transition to a macroscopically synchronized state as the variance of the frequencies was reduced. 

Recent research has complemented these studies by exploring the behavior of these models on random graphs \cite{Abdalla, Kassabov, Ling, Mehtaab}. For simplicity, suppose that the oscillators have identical frequencies (a case known as the ``homogeneous'' model~\cite{Taylor2012}). Then, by going into a rotating frame, one can set $\nu_i = 0$ for all $i$ without loss of generality. Indeed, we will assume $\nu_i = 0$ for all $i$ from now on. A key question in this setting is how the topology of the network affects its tendency to synchronize. 

Two types of synchronization, \textit{phase synchronization} and \textit{frequency synchronization}, are of particular interest in this context. Phase synchronization is the strongest possible notion of synchrony; it means that the oscillators asymptotically approach the same phase. Frequency synchronization means that the oscillators asymptotically move at the same constant frequency.

\subsection{Graphons}

The recent literature has used graphon theory~\cite{Medvedev_2014,Medvedev_2013,Medvedev_main} to study random networks of oscillators and their synchronization transition in the large-$n$ limit. Mathematically, a \textit{graphon} is a symmetric measurable function on the unit square.  Intuitively, a graphon can be interpreted as the continuum limit of the adjacency matrix of an undirected graph as its size tends to infinity \cite{Borgs,Lovasz}. Building on this interpretation, we can define a continuum network of oscillators where each element in the interval $[0,1]$ labels an oscillator whose behavior is governed by an integro-partial differential equation with interactions dictated by the graphon. 

A second interpretation of a graphon is as a random graph model \cite{Lovasz}. Here one constructs a ``sampled'' adjacency matrix (also known as a $W$-random network) of size $n$ from the graphon. The probability that two nodes are adjacent to one another is determined by a discretization of the graphon weighted by a \textit{scaling factor}, $\alpha_n$, where $0<\alpha_n\leq{1}$ for all $n$. In this second interpretation, sampled network dynamics are defined such that the oscillator at each node is governed by an ordinary differential equation, with interactions dictated by the sampled adjacency matrix. To understand the behavior of large random networks of oscillators, our strategy is to prove a convergence result that relates the solution of the \textit{continuum dynamics} to the \textit{sampled dynamics} as $n$ goes to infinity. 


\subsection{Contributions}

Our analysis yields two main contributions to the study of oscillators on random graphs. The first is a convergence result. We prove that when the graphon $W$ is continuous, a piecewise interpolant of the solution to the sampled dynamical system of size $n$ converges in the $L^{\infty}$ norm to the solution of the continuous graphon dynamical system,  with high probability as $n\rightarrow\infty$. Second, we apply this convergence result to study synchronization properties of identical phase oscillators. We focus on two oscillator models, the Kuramoto and Sakaguchi-Kuramoto oscillators, interacting over Erd\H{o}s-R\'enyi random networks.

\subsection{Relation to Previous Work}
\sloppy

Our convergence result is related to recent studies initiated by Medvedev~\cite{Medvedev_2014, Medvedev_2013, Medvedev_main} and further explored by Bramburger, Holzer, and Williams in \cite{bramburger2024}. Assuming a bounded, symmetric, and almost everywhere continuous graphon, Medvedev proves convergence of the sampled dynamics to the continuous dynamics (as defined above) as $n$ goes to infinity in the $L^{2}$ norm with high probability \cite{Medvedev_2014}. In \cite{Medvedev_main}, Medvedev loosens the regularity assumptions on the graphon and strengthens the convergence results in the $L^{2}$ norm by proving convergence with probability $1$ as $n$ goes to infinity. Another key distinction between \cite{Medvedev_2014} and \cite{Medvedev_main} is that the framework  in \cite{Medvedev_main} allows for the sparse graphon regime with $\alpha_n=\omega(n^{-1/2})$. 

Our work also operates under the sparse graphon regime ($\frac{\log (n) / n}{\alpha_n^3} \rightarrow 0$ as $n\rightarrow\infty$) and shows convergence with high probability in the $L^{\infty}$ norm, albeit via stronger regularity assumptions on the graphon. Proving convergence in the $L^{\infty}$ norm, instead of $L^{2}$, is necessary in our work to study the synchronization properties of Kuramoto and Sakaguchi-Kuramoto oscillators interacting over Erd\H{o}s--R\'enyi random networks.

\fussy
Our analysis is also related to recent studies of \textit{global synchronization} \cite{Abdalla,Kassabov,Ling}. A network of oscillators is said to globally synchronize if it converges to a state with all the oscillators in phase, starting from any initial condition except for a set of measure zero \cite{Kassabov}. In \cite{Bandeira}, it was conjectured that a system of $n$ identical Kuramoto oscillators on an Erd\H{o}s-R\'enyi random graph would globally synchronize with high probability for $p$ right above the connectivity threshold, that is, for $p =(1+\epsilon)\left(\frac{\log(n)}{n}\right)$ where $\epsilon>0$. Ling et al. \cite{Ling} took the first step in this direction by proving that global synchronization occurs with high probability for $p=\frac{\Omega\left(\log(n)\right)}{n^{1/3}}$. This result was later improved to $p=\frac{{\Omega\left(\log(n)\right)}^{2}}{n}$  by Kassabov et al.~\cite{Kassabov}, and finally the  original conjecture was proven in~\cite{Abdalla}. 

Our work offers a different perspective. Instead of focusing directly on the finite-$n$ case, we drive a connection with the continuum  model. By setting our graphon equal to $1$, the resulting sparse $W$-random network coincides with an Erd\H{o}s-R\'enyi model where the probability  of an edge existing between two nodes is $\alpha_n$ (where $\alpha_n\rightarrow{0}$ as $n\rightarrow{\infty}$). In this case, synchronization properties of the continuum Kuramoto model are inherited with high probability by the sampled networks. We show that when the continuum model synchronizes, the oscillators on the finite-$n$ Erd\H{o}s-R\'enyi random graph attain phases that get close to each other---specifically, within a distance of $\pi$ of each other---at a fixed time, with high probability for large $n$. We then borrow a basin of attraction argument from~\cite{Jadbabaie} to conclude that the finite-$n$ system achieves phase synchronization with high probability as $n$ tends to infinity.

Unlike previous works \cite{Abdalla,Kassabov}, our framework does \emph{not} rely on the gradient structure of the Kuramoto model; hence, it may provide insight into synchronization for a broader class of oscillator networks. To illustrate this advantage, we consider the Sakaguchi-Kuramoto model, which is not a gradient system. Here, we set the graphon equal to $p \in (0,1]$ and $\alpha_n=1$. In this case, we show that if the continuum model phase synchronizes, then the oscillators in the sampled network attain phases that lie within a $\frac{\pi}{2} - \beta$ distance of one another at a fixed time, with high probability. Using an argument from \cite{Bullo}, we then infer that under these same initial conditions, the Sakaguchi-Kuramoto model on the sampled network achieves frequency synchronization with high probability as $n \to \infty$, for fixed $p \in (0,1]$. 


\subsection{Roadmap}

The background section (Section \ref{Background}) consists of four parts. In Section \ref{W-Random Graph Model}, we introduce the mathematical notion of a graphon and explain how to generate finite $W$-random networks from it. Section \ref{Infinite Population Oscillator Dynamics} introduces the continuum dynamical system. In Section \ref{Sampled Oscillator Dynamics}, we define the sampled  dynamical system that interacts according to the $W$-random network obtained from the graphon. In Section \ref{Main Convergence Result Section}, we state our main result: For $n$ sufficiently large, a piecewise interpolant of the solution to the sampled dynamics over a $W$-random network of size $n$ converges to the solution of the continuum dynamics in the $L^{\infty}$ norm for any fixed time with high probability. 

The proof of our main convergence result is given in Section \ref{Convergence Proofs} and involves two intermediate stages. First, for large enough $n$, we show that the solution of the sampled dynamics converges to the solution of a simpler system that we call the \textit{averaged oscillator dynamics}, in which the oscillators are assumed to interact over a complete graph instead of a random graph (Section \ref{Comparing the Sampled System to the Averaged System}).  Section \ref{Comparing the Averaged System to the Continuum System} presents the second stage of the proof. There we show that a piecewise interpolant of the solution to the averaged dynamics converges to the solution of the continuum dynamics. {This is where regularity assumptions on the graphon are needed.} Finally, in Section \ref{Comparing the Sampled System to the Continuum System} we combine the results from Section \ref{Comparing the Sampled System to the Averaged System} and Section \ref{Comparing the Averaged System to the Continuum System} to prove our main convergence result. 

In Section~\ref{Synchronization}, we apply our convergence result to study phase synchronization in Kuramoto oscillators and frequency synchronization in Sakaguchi-Kuramoto oscillators, both interacting over large Erd\H{o}s--R\'enyi random networks. This is done by considering the behavior of the corresponding continuum models, leveraging basin of attraction arguments, and using the convergence result to transfer insights from the continuum to the large, finite network setting.

\section{Background}\label{Background}
\subsection{\texorpdfstring{$W$}{W}-Random Graph Model}\label{W-Random Graph Model}

Let $I$ denote the closed unit interval $[0,1]$ and let $W:I^{2}\rightarrow I$ be a continuous, real-valued, symmetric function that we refer to as a graphon. 

As discussed earlier, graphons have two interpretations. First, graphons may be used to describe interactions among an infinite population (with nodes indexed by real numbers on the interval $[0,1]$) and second, graphons may be interpreted as a discrete random graph model that gives rise to a $W$-random graph generation process. 

Figure~\ref{fig1} illustrates how a graphon $W$ can be discretized and then used to obtain a random network. From $W$, we construct\footnote{Note that this $W$-random graph generation process differs slightly from the conventional generative process described in \cite{Lovasz} and \cite{Borgs}, in which $W^{(n)}_{ij}$ is obtained by evaluating the graphon at a pair of independently sampled points from the unit interval rather than an averaging scheme.} a sampled, undirected $n\times{n}$ $W$-random network with adjacency matrix $A^{(n)}$ such that $A_{ij}^{(n)} = A_{ji}^{(n)}= \textrm{Ber}(\alpha_n W^{(n)}_{ij})$ where
\begin{align}\label{eqn:graphon-sampling}
    W^{(n)}_{ij} = n^{2}\int_{I_i^{(n)}\times{I_j^{(n)}}} W(x,y) d x d y.
\end{align} 

\noindent Here, $\text{Ber}(p)$ denotes a Bernoulli random variable with probability $p$ and $I_i^{(n)} = \left[\frac{i-1}{n}, \frac{i}{n} \right)$ where $1\leq {i}\leq{n}$, and the \textit{scaling factor}, $\alpha_n$ is such that $\alpha_n\leq{1}$.


\begin{figure}[t]
\centerline{\includegraphics[width=\columnwidth]{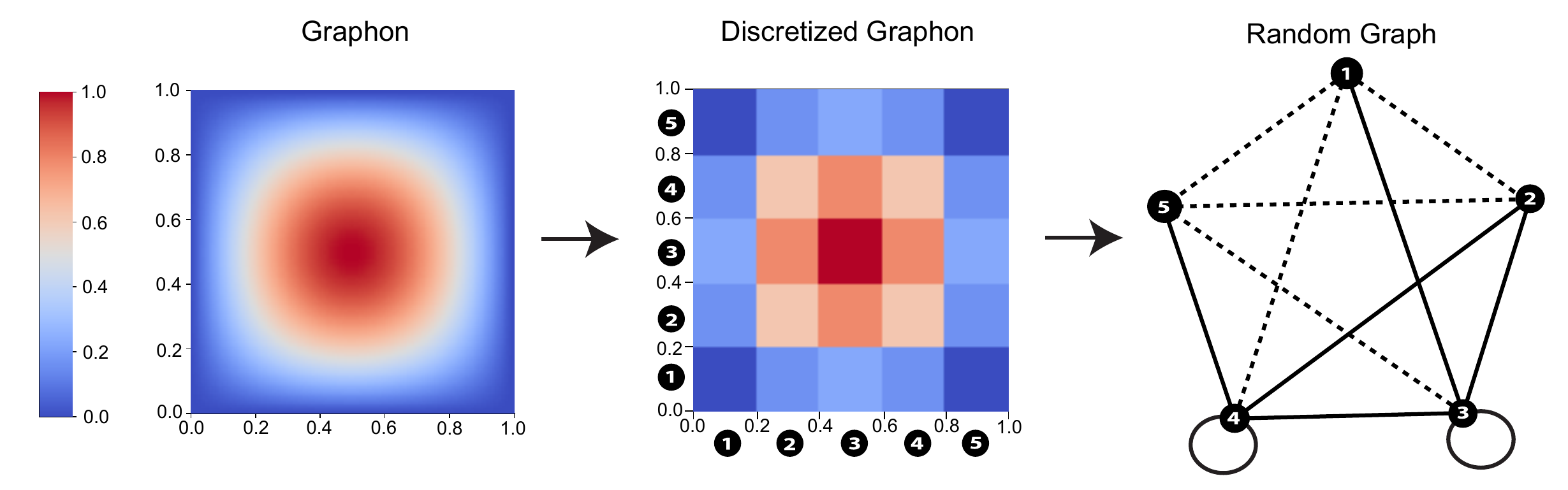}}
\caption{Schematic illustration of how to obtain a random network from a graphon. In the example shown, a random graph on $5$ nodes is generated from the continuous graphon $W(x,y) = \sin(\pi{x})\sin(\pi{y})$. The discretized graphon is a function defined on the unit square, $I^{2}$, where each $x,y\in {I_i^{(5)}\times{I_j^{(5)}}}$ assumes the value $W^{5}_{ij} = 5^{2}\int_{I_i^{(5)}\times{I_j^{(5)}}}W(x,y)dydx$. From this discretized graphon, one may construct a random network of size $5$ by letting the probability that there exists an edge between nodes $i$ and $j$ be equal to $W^{(5)}_{ij}$.}
\label{fig1}
\end{figure}

\subsection{Infinite Population Oscillator Dynamics}\label{Infinite Population Oscillator Dynamics}
With respect to the first interpretation of graphons, we consider an infinite population of oscillators interacting according to the following continuum dynamical system (CDS):


\begin{equation}\label{eqn:continuum_dynamics}
  \partial_t \theta(t, x)=f(\theta(t,x))+\int_I W(x, y) D(\theta(t, y)-\theta(t, x)) d y, \quad x \in I\tag{CDS}
\end{equation}
where the initial condition $\theta(0, x)=\eta(x)$ and $\eta\in C([0,1])$. In other words, we assume the initial phases of the oscillators vary smoothly with the oscillator index $x$, as given by a continuous function $\eta(x).$ Furthermore, we assume that the oscillators' uncoupled dynamics $f(\theta)$ is independent of $x$. This is what we mean by saying that the oscillators are identical. We assume that $f(\theta)$ is Lipschitz continuous (with Lipschitz constant $L_f$), $2\pi$-periodic in $\theta$. The coupling kernel $D$ is a $2 \pi$-periodic Lipschitz continuous function (with Lipschitz constant $L_D$) and such that $\max _{\theta\in \mathbb{R} }|D(\theta)|=1$. In Theorem~\ref{thm:existence} stated below, we show the existence of a unique, global-in-time solution $\theta(t,\cdot)\in C([0,1])$ of the continuum system~\eqref{eqn:continuum_dynamics}. Note that similar results may be found in~\cite{Li} or~\cite{Kaliuzhnyi-Medvedev}, but we include a proof in the Appendix for completeness. 

\begin{theorem}\label{thm:existence}
    Let $\eta\in C([0,1],\mathbb{R})$ and $W\in C([0,1]^2,\mathbb{R})$.
    \begin{enumerate}
        \item[(i)] For any $T>0$, the system~\eqref{eqn:continuum_dynamics} has a unique solution $\theta\in ([0,T];L^\infty([0,1]))$.
        \item[(ii)] Furthermore, $\theta\in C([0,T]\times[0,1];\mathbb{R})$ and $\partial_t\theta(t,x)\in C([0,T]\times[0,1];\mathbb{R})$.
    \end{enumerate}
\end{theorem}
\begin{proof}
    Refer to Appendix~\ref{appendix:existence-theorem}.
\end{proof}

\subsection{Sampled Oscillator Dynamics}\label{Sampled Oscillator Dynamics}
\sloppy
Adopting the perspective from \cite{Lovasz} in which we view a graphon as a random graph model, for each oscillator $i\in[n]:=\{1,...,n\}$ we consider the following sampled dynamical system (SDS):

\fussy
\begin{equation}\label{eqn:sampled_dynamics}
  \dot{\theta}_i^n(t)=f\left(\theta_i^n(t)\right)+\frac{1}{n\alpha_n} \sum_{j=1}^n A_{i j}^{(n)} D\left(\theta_j^n(t)-\theta_i^n(t)\right)\tag{SDS}
\end{equation}
with initial conditions ${\theta}_i^n(0) = \eta\left(\frac{i-1}{n}\right)$. Here, $\theta_i^n: [0,\infty) \rightarrow \mathbb{R}$ is the phase of oscillator $i$ as a function of time and $A_{i j}^{(n)}$ is the sampled adjacency matrix obtained from the graphon. To compare solutions of \eqref{eqn:sampled_dynamics}, $\theta^{n} = (\theta^{n}_1,\dots,\theta^{n}_n)^{T}$, to solutions of \eqref{eqn:continuum_dynamics} we define a piecewise constant interpolant
$$
    \theta^{n}(t,x) = \sum_{i=1}^{n} \theta^{n}_i(t) \mathbbm{1}_{\left[\frac{i-1}{n},\frac{i}{n}\right)} (x).
$$

\noindent Here, $\mathbbm{1}_E$ denotes the characteristic function of a set $E$, defined by
\begin{align*}
    \mathbbm{1}_E(x):=\begin{cases}
        1 &\text{if }x\in E,\\
        0 &\text{if }x\notin E.
    \end{cases}
\end{align*}

\noindent We define the $L^\infty$ norm of a (piecewise) continuous function $u$ on the interval $I$ as
\begin{align*}
    \|u\|_{L^\infty(I)}=\max_{x\in[0,1]}|u(x)|.
\end{align*}

\subsection{Main Convergence Result}\label{Main Convergence Result Section}

\begin{theorem} \label{main_theorem}
Suppose that $W\in C(I^2)$ is a symmetric function, $\frac{\log (n) / n}{\alpha_n^3} \rightarrow 0$, and let ${\theta}^{}(t,x)$ be the solution to (CDS) and ${\theta}^{n}(t,x)$ be the piecewise constant interpolant solution of (SDS) with initial condition $\eta\in C([0,1])$. For any fixed $\delta,\epsilon,T,>0$, there exists $\bar{n}\in\mathbb{N}$  such that for each $n>{\bar{n}},$ 
\[
    \|{\theta}^{n}(t,x)-{\theta}^{}(t,x)\|_{L^{\infty}(I)} < \epsilon, \quad \forall{t}\leq{T}
\] 
with probability at least $1-\delta$.
\end{theorem}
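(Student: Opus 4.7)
The plan is to follow the two-stage decomposition foreshadowed in the roadmap. I introduce an intermediate \emph{averaged} dynamical system (ADS),
\begin{equation*}
  \dot{\bar\theta}^n_i(t)=f(\bar\theta^n_i(t),t)+\frac{1}{n}\sum_{j=1}^{n} W^{(n)}_{ij}\,D(\bar\theta^n_j(t)-\bar\theta^n_i(t)),\qquad \bar\theta^n_i(0)=\eta\bigl(\tfrac{i-1}{n}\bigr),
\end{equation*}
obtained from (SDS) by replacing each Bernoulli entry $A^n_{ij}$ by its mean $\alpha_n W^{(n)}_{ij}$, and I let $\bar\theta^n(t,x)$ denote its piecewise constant interpolant. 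Then by the triangle inequality,
\begin{equation*}
    \|\theta^n(T,\cdot)-\theta(T,\cdot)\|_{L^\infty(I)}\le \|\theta^n(T,\cdot)-\bar\theta^n(T,\cdot)\|_{L^\infty(I)}+\|\bar\theta^n(T,\cdot)-\theta(T,\cdot)\|_{L^\infty(I)},
\end{equation*}
so it suffices to bound each summand by $\varepsilon/2$ on a common event of probability at least $1-\delta$.

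For the first (stochastic) summand, I subtract the two systems and add-and-subtract $\alpha_n W^{(n)}_{ij}\,D(\theta^n_j-\theta^n_i)$ inside the coupling sum. This splits the error into a deterministic Lipschitz term, absorbed via Gr\"onwall using the Lipschitz continuity of $f$ and $D$ together with the uniform bound $\frac{1}{n}\sum_j W^{(n)}_{ij}\le 1$, and a stochastic noise term
\[
R_i^n(t)=\frac{1}{n\alpha_n}\sum_{j=1}^{n}\bigl(A^n_{ij}-\alpha_n W^{(n)}_{ij}\bigr)\,D(\theta^n_j(t)-\theta^n_i(t)).
\]
Because $\theta^n$ itself depends on the random adjacency matrix, $R_i^n(t)$ is not a direct sum of independent variables. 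The remedy is to freeze the argument of $D$ by replacing $\theta^n$ with the deterministic $\bar\theta^n$; the discrepancy is then controlled by $L_D\max_j|\theta^n_j-\bar\theta^n_j|$ times an empirical degree that concentrates near $1$. A Bernstein inequality applied to the frozen version at a finite grid of times, together with a union bound over $i\in[n]$ and over the grid, yields uniform concentration. The hypothesis $n\alpha_n=\omega(\log n)$ is exactly what makes this union bound succeed, which is the origin of the Erd\H{o}s-R\'enyi connectivity threshold in our standing assumption on $\alpha_n$.

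For the second (deterministic) summand, I compare $\bar\theta^n_i(t)$ with $\theta(t,(i-1)/n)$. Subtracting the two equations, the difference of coupling terms splits into (i) a Riemann-sum error between $\int_I W(x,y)D(\theta(t,y)-\theta(t,x))\,dy$ and $\frac{1}{n}\sum_j W^{(n)}_{ij}D(\theta(t,\tfrac{j-1}{n})-\theta(t,\tfrac{i-1}{n}))$, and (ii) a Lipschitz term in the phase differences. The Riemann-sum error is $O(1/n)$ uniformly in $i$ because $W\in C^1(I^2)$ and because $\theta(t,\cdot)$ inherits $C^1$ regularity in $x$ from the $C^1$ regularity of $\eta$ and $W$; this regularity of $\theta(t,\cdot)$ follows by differentiating (CDS) in $x$ and applying Gr\"onwall to $\partial_x\theta$. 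A second Gr\"onwall argument then upgrades pointwise convergence on the grid to an $L^\infty$ estimate for the piecewise constant interpolant, using $|\theta(t,x)-\theta(t,(i-1)/n)|=O(1/n)$ on each cell $I_i^{(n)}$.

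The hard part will be the stochastic step: the naive concentration argument fails because the trajectory $\theta^n$ is a random function of $A^n$, and disentangling this dependence through the freezing trick, a Lipschitz-based feedback estimate, and uniform (in $i$ and $t$) Bernstein-type bounds is what truly drives the $\omega(\log n)$ scaling. Once this is in place, intersecting the two stages' high-probability events and choosing $n$ large enough to force both error bounds below $\varepsilon/2$ completes the proof.
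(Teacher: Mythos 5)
Your proposal follows essentially the same route as the paper: the same intermediate averaged system (ADS) with $W^{(n)}_{ij}$ in place of $A^n_{ij}/\alpha_n$, the same triangle-inequality synthesis, a Bernstein-plus-union-bound concentration step whose success is exactly what forces $n\alpha_n=\omega(\log n)$, and Gr\"onwall arguments on both halves. The two places where your implementation diverges are worth noting. For the stochastic half, the paper does not freeze the argument of $D$: it bounds the noise term by the trajectory-independent quantity $g_{n,i}=\tfrac12\bigl(\tfrac1n\sum_j(A^n_{ij}/\alpha_n-W^{(n)}_{ij})\bigr)^2$ (discarding the $D$ factor outright after Young's inequality) and then applies Bernstein to $\sum_j(A^n_{ij}/\alpha_n-W^{(n)}_{ij})$ and a comparison lemma for the resulting positive differential inequality; your freezing trick, which keeps $D(\bar\theta^n_j-\bar\theta^n_i)$ as deterministic weights and feeds the discrepancy $L_D\max_j|\theta^n_j-\bar\theta^n_j|$ back into Gr\"onwall, confronts the signed-sum issue more directly and is, if anything, the more careful treatment of that step, at the cost of an extra union bound over a time grid that the paper's pathwise differential inequality avoids. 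For the deterministic half, the paper compares the piecewise-constant interpolant $\bar\theta^n(t,x)$ to $\theta(t,x)$ directly, so the coupling term of (ADS) is \emph{exactly} $\int_0^1 W^n(x,y)D(\bar\theta^n(t,y)-\bar\theta^n(t,x))\,dy$ and the only $O(1/n)$ errors come from $\|\eta_n-\eta\|_{L^\infty}$ and $\|W^n-W\|_{L^\infty(I^2)}$; your grid-point comparison instead incurs a genuine Riemann-sum error and therefore needs the additional (true but unproved in the paper) fact that $\theta(t,\cdot)$ stays $C^1$ in $x$, a step you could simply avoid by adopting the interpolant-level comparison.
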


\section{Convergence Proofs} \label{Convergence Proofs}

\subsection{Comparing the Sampled System to the Averaged System}\label{Comparing the Sampled System to the Averaged System}

To prove our main convergence result (Theorem \ref{main_theorem}), we first compare the solutions of \eqref{eqn:sampled_dynamics} to the solutions of an averaged dynamical system (ADS) where each oscillator $i\in[n]$ adopts the following dynamics:

\begin{equation}\label{averaged_dynamics}
  \dot{\bar{\theta}}_i^n(t)=f\left(\bar{\theta}_i^n (t)\right)+\frac{1}{n} \sum_{j=1}^n W_{i j}^{(n)} D\left(\bar{\theta}_j^n (t)-\bar{\theta}_i^n(t)\right)\tag{ADS}
\end{equation}

\noindent where ${\bar{\theta}}_i^n(0) = \eta\left(\frac{i-1}{n}\right)$. Note the only difference between \eqref{eqn:sampled_dynamics} and \eqref{averaged_dynamics} is that $W_{i j}^{(n)}$ is used in \eqref{averaged_dynamics} while $\frac{A_{i j}^{(n)}}{\alpha_n}$ is used in \eqref{eqn:sampled_dynamics}. Again, $\bar{\theta}_i^n:[0,\infty) \rightarrow \mathbb{R} $ is the phase of oscillator $i$ as a function of time and $\bar{\theta}^{n} = (\bar{\theta}^{n}_1,\dots,\bar{\theta}^{n}_n)^{T}$. 

\begin{proposition}\label{convgence_sampled_avged}
Suppose that $W\in C(I^2)$ is a symmetric function, $\frac{\log (n) / n}{\alpha_n^3} \rightarrow 0$, and let ${\theta}^{n}(t)$ be the solution to (SDS) and ${\bar{\theta}}^{n}(t)$ be the solution of (ADS) with initial condition $\eta\in C([0,1])$. For any fixed $\delta,\epsilon,T>0$, there exists $n_1\in\mathbb{N}$ such that for each $n>{n_1},$ 
$$
    \|{\theta}^{n}(t)-\bar{\theta}^{n}(t)\|_{\infty} < \epsilon,  \quad \forall{t\leq{T}}
$$ 
\noindent with probability at least $1-\delta$.
\end{proposition}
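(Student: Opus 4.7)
\medskip

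The plan is to control the pointwise difference $e_i(t) := \theta_i^n(t) - \bar{\theta}_i^n(t)$ by a Grönwall-type argument, after isolating a single stochastic term that will be handled by a concentration inequality. Subtracting the two ODEs gives
\begin{align*}
\dot e_i(t) &= \bigl[f(\theta_i^n,t)-f(\bar{\theta}_i^n,t)\bigr] + \frac{1}{n\alpha_n}\sum_{j=1}^n A_{ij}^n\bigl[D(\theta_j^n-\theta_i^n)-D(\bar{\theta}_j^n-\bar{\theta}_i^n)\bigr] \\
&\quad + \sum_{j=1}^n \Bigl(\tfrac{A_{ij}^n}{n\alpha_n} - \tfrac{W_{ij}^{(n)}}{n}\Bigr)\, D(\bar{\theta}_j^n-\bar{\theta}_i^n).
\end{align*}
Using the Lipschitz constants $L_f$ and $L_D$ and $|D|\le 1$, the first two (``deterministic'') terms are bounded by $L_f|e_i| + \tfrac{L_D}{n\alpha_n}\sum_j A_{ij}^n(|e_j|+|e_i|)$, which after taking a max over $i$ yields $\bigl(L_f + L_D(1+\Delta_n)\bigr)\|e(t)\|_\infty$ where $\Delta_n:=\max_i \tfrac{1}{n\alpha_n}\sum_j A_{ij}^n$ is a rescaled maximum degree.

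The main task is to bound the stochastic term $S_i(t) := \sum_j \bigl(\tfrac{A_{ij}^n}{n\alpha_n}-\tfrac{W_{ij}^{(n)}}{n}\bigr) D(\bar{\theta}_j^n-\bar{\theta}_i^n)$ uniformly in $i$ and $t\in[0,T]$. Crucially, the averaged trajectory $\bar{\theta}^n$ is \emph{deterministic}, so for each fixed $i,t$ the quantity $S_i(t)$ is a sum of independent centered Bernoulli-type variables with bounded coefficients $c_j=D(\bar{\theta}_j^n-\bar{\theta}_i^n)/(n\alpha_n)\in[-\tfrac{1}{n\alpha_n},\tfrac{1}{n\alpha_n}]$ and total variance at most $\tfrac{1}{n\alpha_n}$. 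Bernstein's inequality then gives $\mathbb{P}(|S_i(t)|>\epsilon') \le 2\exp\bigl(-c\,\epsilon'^2\, n\alpha_n\bigr)$ for some constant $c$ depending only on $\epsilon'$. Because $n\alpha_n = \omega(\log n)$, a union bound over the $n$ indices $i$ makes the probability of failure vanish. To upgrade the bound from a fixed $t$ to all $t\in[0,T]$, I would discretize $[0,T]$ on a grid of spacing $O(1/n)$ (the map $t\mapsto S_i(t)$ is Lipschitz since $\bar{\theta}^n$ has bounded derivative by boundedness of $f$ and $D$) and union bound over the $O(nT)$ grid points, still absorbed by the $\exp(-c\epsilon'^2 n\alpha_n)$ factor. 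The same argument applied to the row sums yields $\Delta_n\le 1 + o(1)$ with high probability (since $\mathbb{E}[\tfrac{1}{n\alpha_n}\sum_j A_{ij}^n]=\tfrac{1}{n}\sum_j W_{ij}^{(n)}\le 1$).

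On the complement of this low-probability event, $e_i(0)=0$ and
\begin{equation*}
\|e(t)\|_\infty \le \int_0^t\bigl(L_f + L_D(1+\Delta_n)\bigr)\|e(s)\|_\infty\,ds + T\cdot\sup_{s\le T,\, i}|S_i(s)|,
\end{equation*}
so Grönwall's inequality yields $\|e(T)\|_\infty \le T\,e^{(L_f+2L_D)T}\sup_{s,i}|S_i(s)|$. Choosing $\epsilon'=\epsilon\,T^{-1}e^{-(L_f+2L_D)T}$ in the concentration step delivers the claim for $n$ large enough that the union-bound probability drops below $\delta$.

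The step I expect to be the only real obstacle is the uniform-in-$(i,t)$ control of $S_i(t)$: one must handle the continuous time variable (via a grid and Lipschitz-in-$t$ estimate on $\bar{\theta}^n$) and absorb the $n$ from the union bound over oscillators, which is exactly why the sparsity threshold $\alpha_n=\omega(\log n/n)$ enters. All remaining pieces (Lipschitz comparisons, Grönwall) are routine.
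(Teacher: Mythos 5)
Your proposal is correct, but it follows a genuinely different route from the paper's, and the differences are worth spelling out. The paper works with $u_i=(\theta_i^n-\bar\theta_i^n)^2$, splits $\dot u_i$ into three terms, and crucially pairs the random weights $\tfrac{A_{ij}^n}{n\alpha_n}-\tfrac{W_{ij}^{(n)}}{n}$ with $D$ evaluated along the \emph{sampled} trajectory; it then discards the $D$ factor so that the stochastic contribution is bounded by the time-independent quantity $g_{n,i}=\tfrac12\bigl(\tfrac1n\sum_j(\tfrac{A_{ij}^n}{\alpha_n}-W_{ij}^{(n)})\bigr)^2$, controlled once per row by Bernstein plus a union bound (Lemma~\ref{upper_bounding_dynamical_system}), and feeds the resulting inequality $\dot u_i\le cu_i+\tfrac dn\sum_j u_j+\bar g_n$ into a comparison lemma for positive systems (Lemma~\ref{positive_dyn_sys_res}) rather than Gr\"onwall. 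You instead pair the random weights with $D$ evaluated along the \emph{averaged} trajectory, which is the decomposition under which the summands of $S_i(t)$ are genuinely independent and centered with the $D$ weights included; the price is that $S_i(t)$ depends on $t$, so you need the grid-plus-Lipschitz chaining step and the separate max-degree bound $\Delta_n\le 1+o(1)$ (the latter arises because your Lipschitz-in-$D$ term carries the random weights $A_{ij}^n/(n\alpha_n)$, whereas the paper's term $I_3$ carries the deterministic weights $W_{ij}^{(n)}/n\le 1/n$ and needs no such control). Both routes hit the same bottleneck --- a union bound over $n$ rows absorbed by $\exp(-c\,n\alpha_n)$, which is exactly where $\alpha_n=\omega(\log n/n)$ enters --- and your extra steps (time grid, degree concentration) are routine and correctly justified, so there is no gap; your version arguably makes the independence structure behind the concentration step more transparent, at the cost of a longer argument.
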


\begin{proof} Define the variable $\phi_i^{n}(t) = {\theta}_i^{n}(t)- \bar{\theta}_i^{n}(t)$ and $u_{i}(t) = (\phi_i^{n}(t))^{2}$. Observe that for all $t$,

\begin{align*}
\dot{u}_i(t) =& \underbrace{2 \phi_i^n(t) \left(f\left(\theta_i^n(t)\right)-f\left(\bar{\theta}_i^n(t)\right)\right)}_{I_1(t)}\\
&+ \underbrace{2 \phi_i^n(t)\left(\frac{1}{ n\alpha_n} \sum_{j=1}^n A^{(n)}_{ij}D\left(\bar{\theta}_j^n(t)-\bar{\theta}_i^n(t)\right)-\frac{1}{n} \sum_{j=1}^n W^{(n)}_{ij} D\left(\bar{\theta}_j^n(t)-\bar{\theta}_i^n(t)\right)\right)}_{I_2(t)}\\
&+\underbrace{2 \phi_i^n(t)\left(\frac{1}{n\alpha_n} \sum_{j=1}^n A_{i j}^{(n)} D\left(\theta_j^n(t)-\theta_i^n(t)\right)-\frac{1}{ n\alpha_n} \sum_{j=1}^n A_{i j}^{(n)} D\left(\bar{\theta}_j^n(t)-\bar{\theta}_i^n(t)\right)\right)}_{I_3(t)}.
\end{align*}

\noindent By the triangle inequality we have that $\dot{u}_i \leq{|I_1(t)|+|I_2(t)|+|I_3(t)|}.$ We start by bounding $|I_1(t)|$: 

\begin{align*}
|I_1(t)| \leq 2\left| \phi_i^n (t)\right| L_{f}\left|\phi_{i}^{n}(t) \right| = {2 L_{f} (\phi_i^n(t))^{2} \label{eqn:sampled-avgd-termI1}=  2 L_{f} u_{i}(t).}
\end{align*}

\noindent Using the fact that $ ab \leq \frac{1}{2}a^2 + \frac{1}{2}b^2 $ for all $ a, b \in \mathbb{R}$, we can bound $|I_2(t)|$:

\begin{align*}
&|I_2(t)| \leq \left|2 \phi_i^n(t)\right| \cdot\left|\frac{1}{n} \sum_{j=1}^n {\left(\frac{A^{(n)}_{i j}}{\alpha_n}-W^{(n)}_{i j}\right)} D\left(\bar{\theta}_j^n(t)-\bar{\theta}_i^n(t)\right)\right|\\
&\leq \frac{1}{2}\left(2 \phi_i^n(t)\right)^2 + \frac{1}{2}\left(\underbrace{\frac{1}{n} \sum_{j=1}^n {\left(\frac{A^{(n)}_{i j}}{\alpha_n}- W^{(n)}_{i j}\right)} {D\left(\bar{\theta}_j^n(t)-\bar{\theta}_i^n(t)\right)}}_{=:g_{n,i}(t)}\right)^2  = 2u_i(t)+\frac{1}{2}(g_{n,i}(t))^{2}.
\end{align*}

\noindent Bounding $|I_3(t)|$:

\begin{align*}
&|I_3(t)| \leq \left|2 \phi_i^n(t)\right|\left|\frac{1}{n} \sum_{j=1}^{n} \frac{A^{(n)}_{i j}}{\alpha_n}\left(D\left(\theta_j^n(t)-\theta_i^n(t)\right)-D\left(\bar{\theta}_j^n(t)-\bar{\theta}_i^n(t)\right)\right)\right|\\
&\leq \left|2 \phi_i^n(t)\right| \frac{1}{n } \sum_{j=1}^{n} \frac{A^{(n)}_{i j}}{\alpha_n} L_D\left(\left|\phi_j^n(t)\right|+\left|\phi_i^n(t)\right|\right)\\
&=  \frac{L_{D}}{n } \sum_{j=1}^{n} \frac{A^{(n)}_{i j}}{\alpha_n}\left(2\left|\phi_i^n(t)\right|\left|\phi_j^n(t)\right|+2\left|\phi_i^n(t)\right|\left|\phi_i^n(t)\right|\right)\\
&\leq  \frac{L_{D}}{n } \sum_{j=1}^{n} \frac{A^{(n)}_{i j}}{\alpha_n} \left((\phi_i^n(t))^{2}+(\phi_j^n(t))^{2}+2u_i(t)\right)\\
&= \frac{L_{D}}{n } \sum_{j=1}^{n} \frac{A^{(n)}_{i j}}{\alpha_n} \left(3u_i(t)+u_j(t)\right)\\
&= L_{D} \left( \frac{3u_i(t)}{n}\sum_{j=1}^{n} \frac{A^{(n)}_{i j}}{\alpha_n} + \frac{1}{n}\sum_{j=1}^{n} \frac{A^{(n)}_{i j}}{\alpha_n} u_j(t)\right).\\
\end{align*}

 By Lemma \ref{Upper Bounding Sampled Adjacency Matrix Part 2} , there exists a $n_{1a}$ such that for all $n\geq{n_{1a}}$ $\frac{1}{n}\sum_{j=1}^{n} \frac{A^{(n)}_{i j}}{\alpha_n}\leq 2$ for all $i$ with probability $1-\frac{\delta}{2}$. Hence, with probability $1-\frac{\delta}{2}$,

$$\dot{u}_i(t) \leq \ \underbrace{(2 L_f+6 L_D+2)}_{=:c} u_i(t)+\frac{L_D}{\alpha_n n} \sum_{j=1}^{n} A^{(n)}_{i j} u_j(t)+\frac{1}{2} (g_{n,i}(t))^2$$

\noindent where $u_i(0) = (\phi_i^{n}(0))^{2} = ({\theta}_i^{n}(0)- \bar{\theta}_i^{n}(0))^{2} = 0$. Define
$$\dot{s}_i(t) = {c} s_i(t)+\frac{L_D}{\alpha_n n} \sum_{j=1}^{n} A^{(n)}_{i j} s_j(t)+\frac{1}{2} (g_{n,i}(t))^2$$

\noindent where $s_i(0) = 0$. 

\vskip0.1in

Two matrices \( X, Y \) satisfy the elementwise inequality \( X \leq_e Y \) if
\[
X_{ij} \leq Y_{ij} \quad \text{for all } i, j.
\]

\vskip0.1in

Let \( B = c I + \frac{L_D}{\alpha_n n} A^{(n)} \geq_e \mathbf{0} \), where \( \mathbf{0} \) is the zero matrix of appropriate size. Define the vectors ${\omega}(t) = [\omega_1(t), \dots, \omega_n(t)]^{T}, {u}(t) = [u_1(t), \dots, u_n(t)]^{T}$, and ${s}(t) = [s_1(t), \dots, s_n(t)]^{T}$, where \( \omega_i(t) = \frac{1}{2}(g_{n,i}(t))^2 \geq 0 \). Observe, ${\dot{{u}}}(t) \leq_{e} B u(t)+\omega(t)$ and ${\dot{{s}}}(t) = B s(t)+\omega(t)$. Then, by Lemma \ref{Positive Dynamical System}, ${u}(t) \leq_e {s}(t)$ for all $t$. 

Observe that $B$ is a Metzler matrix (the off-diagonal terms are non-negative) and $\omega(t)\geq_{e} \mathbf{0}$ for all $t$, implying that $s(t)\geq_{e}\mathbf{0}$  for all $t$. Hence,  $\dot{s}(t)\geq_{e}\mathbf{0}$ for all $t$ which means that $s(t)$ is non-decreasing, and 

$$s(t)\leq_e s(T)=\int_0^T e^{B(T-\tau)} \omega(\tau) d \tau \quad {\forall{t\leq{T.}}}$$ 

By Lemma \ref{Monotonicity of Matrix Powers and Exponential},

$$0 \leq_{e} B(T-\tau) \leq_{e} B \cdot T \Rightarrow e^{B(T-\tau)} \leq_{e} e^{B T}.$$

\noindent Hence, 

$$\int_0^T e^{B(T-\tau)} \omega(\tau) d \tau \leq_{e} e^{BT}\int_0^T  \omega(\tau) d \tau  = \frac{e^{BT}}{2}\left[\int_0^T\left(g_{n,i}(\tau)\right)^2 d \tau\right]_{i=1}^{n}.$$

\noindent By Lemma \ref{Upper Bound over Finite Time Interval on Sampled Network}, for any $\bar{\epsilon}$ there exists $n_{1b}$ such that for all $n\geq n_{1b}$, $\int_0^T\left(g_{n,i}(\tau)\right)^2 d \tau \leq \bar{\epsilon}$ for all $i$ with probability at least $1-\frac{\delta}{2}$. By the union bound, for any $n\geq n_1 :=\max\{n_{1a},n_{1b}\}$, with probability $1-\delta$,

\begin{align*}
&u(t) \leq_e s(t) \leq_e s(T) \leq_e \frac{\bar{\epsilon}}{2} e^{B T} \cdot \mathbf{1} \\
&\implies u_i(t) \leq \frac{\bar{\epsilon}}{2} \max_k\left(\left[e^{B T} \cdot \mathbf{1}\right]_k\right) = \frac{\bar{\epsilon}}{2} \| e^{B T}\|_{\infty} \leq \frac{\bar{\epsilon}}{2} e^{\| {B}\|_{\infty}{T}} \quad \forall{t\leq{T}}.\\
\end{align*}

\noindent The last inequality holds by Lemma \ref{Infinity Norm Bounds for Nonnegative Matrices}, since $B\geq_{e}0$. Finally, note that with the same probability 

$$
\|B\|_{\infty} \leq c+L_{D}\left\|\frac{A^{(n)}}{\alpha_n  n}\right\|_{\infty} \leq c+L_D \max_i \frac{1}{n} \sum_{j=1}^{n} \frac{A^{(n)}_{i j}}{\alpha_n }\leq c+2L_{D}, 
$$
\noindent where the last inequality holds because we have already shown $\frac{1}{n}\sum_{j=1}^{n} \frac{A^{(n)}_{i j}}{\alpha_n}\leq 2$ for all $i$. Thus, for all $i$, $u_i(t) \leq \frac{\bar{\epsilon}}{2} e^{\left(c+2 L_D\right) T}$ for all $t\leq{T}$. To conclude, set $\bar{\epsilon}=\frac{2 \epsilon }{ e^{\left(c+2 L_D\right) T}}$.
\end{proof}

\subsection{Comparing the Averaged System to the Continuum System}\label{Comparing the Averaged System to the Continuum System}

From Theorem~\ref{thm:existence}, we have the existence of a unique, global-in-time solution $\theta(t,x)$ of the continuum system~\eqref{eqn:continuum_dynamics}. We now prove that this solution remains $L^\infty$ close to the solutions of the averaged system~\eqref{averaged_dynamics}. To compare solutions of \eqref{averaged_dynamics}, $\bar{\theta}^{n} = (\bar{\theta}^{n}_1,\dots,\bar{\theta}^{n}_n)^{T}$, to solutions of \eqref{eqn:continuum_dynamics} we define a piecewise constant interpolant

\begin{align}\label{eq:averaged-solution-piecewise-interpolant}
    \bar{\theta}^{n}(t,x) = \sum_{i=1}^{n} \bar{\theta}^{n}_i(t) \mathbbm{1}_{\left[\frac{i-1}{n},\frac{i}{n}\right)} (x).
\end{align}

\begin{proposition}\label{convgence_avged_continuum}
Suppose that $W\in C(I^2)$ is a symmetric function. Let ${\theta}^{}(t,x)$ be the solution to (CDS), with initial condition $\eta\in C([0,1])$, and ${\bar{\theta}}^{n}(t,x)$ be the piecewise constant interpolant solution of (ADS). For any fixed $\epsilon,T>0$, there exists $n_2\in\mathbb{N}$ such that for all $n>n_2$,
\begin{equation}
    \|\bar{\theta}^{n}(t,x)-\theta(t,x)\|_{L^\infty(I)}< \epsilon, \quad \forall{t\leq{T}}.
\end{equation} 
\end{proposition}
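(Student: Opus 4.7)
The plan is to compare the two dynamical systems by recasting (ADS) in an integral form that closely mirrors~\eqref{eqn:continuum_dynamics} and then applying Gronwall's inequality to the pointwise error $\phi^n(t,x):=\bar\theta^n(t,x)-\theta(t,x)$. Define the piecewise-constant graphon $W^{(n)}(x,y):=W^{(n)}_{ij}$ for $(x,y)\in I_i^{(n)}\times I_j^{(n)}$. Using $|I_j^{(n)}|=1/n$ together with the fact that $\bar\theta^n(t,\cdot)$ is constant on each $I_j^{(n)}$, for any $x\in I_i^{(n)}$ the coupling term in (ADS) can be rewritten as
\[
\tfrac{1}{n}\sum_{j=1}^n W_{ij}^{(n)} D\bigl(\bar\theta_j^n-\bar\theta_i^n\bigr) = \int_I W^{(n)}(x,y)\, D\bigl(\bar\theta^n(t,y)-\bar\theta^n(t,x)\bigr)\, dy.
\]
Subtracting~\eqref{eqn:continuum_dynamics} and adding/subtracting $W(x,y)\, D(\bar\theta^n(t,y)-\bar\theta^n(t,x))$ inside the integral splits $\partial_t\phi^n$ into an $f$-difference plus two integral terms: one measuring the gap between $W^{(n)}$ and $W$, and one measuring the gap between $D$ evaluated along $\bar\theta^n$ versus along $\theta$.

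The second step is to bound each contribution. The $f$-difference contributes $L_f|\phi^n(t,x)|$. Since $W\in C^1(I^2)$, $W$ is Lipschitz on the compact square $I^2$, so the cell-average satisfies $|W^{(n)}(x,y)-W(x,y)|\le L_W\sqrt{2}/n$, and combined with $\max|D|\le 1$ this bounds the first integral by $L_W\sqrt{2}/n$. Lipschitz continuity of $D$, together with $W\le 1$ and $\int_I dy = 1$, bounds the second integral by $2L_D M(t)$, where $M(t):=\|\phi^n(t,\cdot)\|_{L^\infty(I)}$. The initial error is $O(1/n)$ because $\eta\in C^1$ yields $|\phi^n(0,x)|=|\eta((i-1)/n)-\eta(x)|\le\|\eta'\|_\infty/n$ for $x\in I_i^{(n)}$, hence $M(0)\le \|\eta'\|_\infty/n$.

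The final step is to integrate the pointwise bound in $t$, pass the supremum in $x$ inside the time integral (via $\sup_x\int\le\int\sup_x$), and apply Gronwall's inequality, producing
\[
M(T)\le \left(M(0)+\tfrac{L_W\sqrt{2}\, T}{n}\right)\exp\bigl((L_f+2L_D)T\bigr) = O(1/n),
\]
which falls below $\epsilon$ once $n$ is sufficiently large, yielding the desired $n_2$. The main obstacle is the initial rewriting of (ADS) as an integral equation against $W^{(n)}$ so that the error admits a clean decomposition into a deterministic $O(1/n)$ discretization error (absorbed by the $C^1$ regularity of $W$ and $\eta$) and a propagation term linear in the current $L^\infty$ error; after this, the Gronwall closure is routine. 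Unlike Proposition~\ref{convgence_sampled_avged}, no probabilistic estimate is required here, since (ADS) is deterministic.
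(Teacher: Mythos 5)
Your proposal is correct and follows essentially the same route as the paper: the same rewriting of the (ADS) coupling as an integral against the piecewise-constant graphon $W^{(n)}$, the same add-and-subtract decomposition into a $W^{(n)}$-versus-$W$ term and a $D$-Lipschitz term, the same $O(1/n)$ bounds on the initial error and on $\|W^{(n)}-W\|_{L^\infty(I^2)}$ from the $C^1$ regularity of $\eta$ and $W$, and the same Gronwall closure. The only cosmetic difference is that you bound the cell-average discrepancy directly via Lipschitz continuity of $W$, whereas the paper first invokes the intermediate value theorem to realize $W^{(n)}_{ij}$ as a point value and then applies a gradient estimate; both yield the same $O(1/n)$ rate.
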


\begin{proof}
\noindent
Fix $T>0$ and let $W^{(n)}(x,y):=\sum_{i,j=1}^{n} W_{ij}^{(n)}\mathbbm{1}_{I_i^{(n)}\times{I_j^{(n)}}}(x,y)$ and consider the difference between the averaged model and the continuum model in the ${L^\infty}$ norm.\footnote{Recall that by standard ODE theory, $\bar{\theta}^{n}_i(t)$ is continuously differentiable in time~\cite{arnold1992ordinary} implying that the following is well defined:
\begin{align*}
    \partial_t\bar{\theta}^n(t,x)=\sum_{i=1}^{n} \dot{\bar{\theta}}^{n}_i(t) \mathbbm{1}_{\left[\frac{i-1}{n},\frac{i}{n}\right)} (x). 
\end{align*} 
}
Note that, by Theorem \ref{thm:existence}, $\partial_t \bar{\theta}^{n}(t,x)-\partial_t \theta(t,x) $ is piece-wise continuous in $x$, thus: 

\begin{align*}
\left\|\partial_t \bar{\theta}^{n}(t,x)-\partial_t \theta(t,x)\right\|_{L^\infty(I)} 
\leq& \left\|f(\bar{\theta}^{n}(t,x))- f({\theta}^{}(t,x)) \right\|_{L^\infty(I)}\\
&+\left\|\int_0^{1} W^{(n)}(x,y)D(\bar{\theta}^{n}(t,y)-\bar{\theta}^{n}(t,x))dy\right. \\
&- \left.\int_0^{1} W^{}(x,y)D({\theta}^{}(t,y)-{\theta}^{}(t,x))dy \right\|_{L^\infty(I)}.
\end{align*}

Since $f$ is assumed to be Lipschitz, we have 
\begin{align*}
\left\|f(\bar{\theta}^{n}(t,x))- f({\theta}^{}(t,x)) \right\|_{L^\infty(I)} \leq L_f \left\|\bar{\theta}^{n}(t,x)- {\theta}^{}(t,x)\right\|_{L^\infty(I)}.
\end{align*}

Moreover, 
\begin{align*}
\left\|\int_0^{1}\right. &W^{(n)}(x,y)D(\bar{\theta}^{n}(t,y)-\bar{\theta}^{n}(t,x))dy - \left.\int_0^{1} W^{}(x,y)D({\theta}^{}(t,y)-{\theta}^{}(t,x))dy \right\|_{L^\infty(I)}\\ 
\leq& \underbrace{\max_{x\in[0, 1]} \int_0^{1} | W^{(n)}(x,y)D(\bar{\theta}^{n}(t,y)-\bar{\theta}^{n}(t,x)) - W^{}(x,y)D(\bar{\theta}^{n}(t,y)-\bar{\theta}^{n}(t,x))|dy}_{I_1(t)}\\ 
&+ \underbrace{\max_{x\in[0, 1]} \int_0^{1}|W^{}(x,y)D(\bar{\theta}^{n}(t,y)-\bar{\theta}^{n}(t,x)) - W^{}(x,y)D({\theta}^{}(t,y)-{\theta}^{}(t,x))| dy}_{I_2(t)}.
\end{align*}
Recalling that $|D|\leq 1$ we have
\begin{align}\label{eqn:avgd-cont-termI1}
|I_1(t)|\leq \max _{x\in[0,1]} \int_0^1\left|W^{(n)}(x, y)-W(x, y)\right| d y \leq \left\|W^{(n)}(x,y)-W^{}(x,y)\right\|_{L^\infty(I^2)}.
\end{align}
On the other hand, since $|W|\leq 1$ and $D$ is Lipschitz,
\begin{align}\label{eqn:avgd-cont-termI2}
|I_2(t)|\leq \max_{x\in[0,1]}L_{D} \int_{0}^1\left|\left(\bar{\theta}^n(t, y)- \theta(t, y)\right)-\left(\bar{\theta}^n(t, x)- \theta(t, x)\right)\right|dy \leq 2L_{D} \left\|\bar{\theta}^{n}(t,x)-{\theta}^{}(t,x)\right\|_{L^\infty(I)}.
\end{align}
Putting~\eqref{eqn:avgd-cont-termI1} and~\eqref{eqn:avgd-cont-termI2} together, we obtain the following estimate:
\begin{align*}
\left\|\partial_t\bar{\theta}^{n}(t,x)-\partial_t\theta(t,x)\right\|_{L^\infty(I)}\leq& (L_f+{2}L_{D}) \left\|\bar{\theta}^{n}(t,x)- {\theta}^{}(t,x)\right\|_{L^\infty(I)}\\
&+ \left\|W^{(n)}(x,y)-W^{}(x,y)\right\|_{L^\infty(I^2)}.  
\end{align*}
By the Fundamental Theorem of Calculus, for all $t>0$
\begin{align*}
    \bar{\theta}^n(t,x)-\theta(t,x) = \bar{\theta}^n(0,x)-\theta(0,x)+\int_0^t\partial_t\bar{\theta}^n(\tau,x)-\partial_t\theta(\tau,x) d\tau.
\end{align*}

Taking the $L^\infty$ norm in the above equation we get
\begin{align*}
\left\|\bar{\theta}^{n}(t,x)- {\theta}^{}(t,x)\right\|_{L^\infty(I)}
\leq& \left\|{\bar{\theta}}^{n}(0,x)- {\theta}^{}(0,x) \right\|_{L^\infty(I)}+ \int_0^{t} \left\|\partial_t \bar{\theta}^{n}(\tau,x)-\partial_t\theta(\tau,x)\right\|_{L^\infty(I)}d\tau\\
\leq&\left\|{\eta}_{n}(x)- {\eta}^{}(x) \right\|_{L^\infty(I)} +\left\|W^{(n)}(x,y)-W^{}(x,y)\right\|_{L^\infty(I^2)}t\\ 
&+ (L_f+{2}L_{D}) \int_0^{t}\left\|\bar{\theta}^{n}(\tau,x)- {\theta}^{}(\tau,x)\right\|_{L^\infty(I)} d\tau,
\end{align*}
where $\eta_n$ is defined as the following discretization:
\begin{align*}
    \eta_{n}(x) := \sum_{i=1}^{n} \eta\left(\frac{i-1}{n}\right) \mathbbm{1}_{\left[\frac{i-1}{n},\frac{i}{n}\right)} (x).
\end{align*}
Note that $f(t) = \left\|\bar{\theta}^{n}(t,x)- {\theta}^{}(t,x)\right\|_{L^\infty(I)}$ is continuous in $t$ by Lemma \ref{lemma:difference-of-solutions-continuity}. By Gronwall's inequality~\cite{hirsch2012differential},
\begin{align*}
&\left\|\bar{\theta}^{n}(t,x)- {\theta}^{}(t,x)\right\|_{L^\infty(I)} \leq (A + Bt) + L \int_0^{t} \left(A + B\tau\right)e^{L(t-\tau)} d\tau,
\end{align*}
where $A:=\|\eta_n(x)-\eta(x)\|_{L^\infty(I)}$, $B:=\left\|W^{(n)}(x,y)-W^{}(x,y)\right\|_{L^\infty(I^2)}$ and $L:=L_f+{2}L_{D}$.
Since $t>0$ and using integration by parts, we have
\begin{align}
\label{eqn:avgd-cont-estimate}
\left\|\bar{\theta}^{n}(t,x)- {\theta}^{}(t,x)\right\|_{L^\infty(I)} \leq Ae^{Lt}-\frac{B}{L}+\frac{B}{L}e^{Lt}\leq\left(A+\frac{B}{L}\right)e^{Lt}.
\end{align}
Since $\eta$ is continuous on a compact interval, it is uniformly continuous on $[0,1]$. Thus, it admits a modulus of continuity, $\gamma_1:[0,\infty)\rightarrow[0,\infty)$, which is increasing and satisfies $\lim_{s\to0}\gamma_1(s)=0$, such that $|\eta(x)-\eta(y)|\leq\gamma_1(|x-y|)$. 
This means that we can bound $A$ as follows
\begin{align}\label{eqn:A-estimate}
    A&=\max_{i\in [n]}\max_{x\in[\frac{i-1}{n},\frac{i}{n}]}\left|\eta(x)-\eta\left(\frac{i-1}{n}\right)\right|\leq\max_{i\in[n]}\max_{x\in[\frac{i-1}{n},\frac{i}{n}]}\gamma_1\left(\left|x-\frac{i-1}{n}\right|\right)\nonumber\\
    &\leq\max_{i\in[n]}\gamma_1\left(1/n\right)=\gamma_1(1/n)
\end{align}
where $\gamma_1(1/n)\rightarrow 0$ as $n\rightarrow\infty$.
\smallskip

Focusing now on $B$,  
\begin{align*}
    B =\max_{i,j\in{[n]\times[n]}}\max_{x,y\in{I_i^{(n)}\times I_j^{(n)}}} \left| W^{(n)}_{ij} -W(x,y) \right|,
\end{align*}
where $W^{(n)}_{ij}$ is defined by~\eqref{eqn:graphon-sampling}. Moreover,
$$
\min_{x,y\in{{I_i^{(n)}}\times{I_j^{(n)}}}} W(x,y) \leq{W^{(n)}_{ij}}\leq \max_{x,y\in{{I_i^{(n)}}\times{I_j^{(n)}}}} W(x,y).
$$
Since $W(x,y)$ is continuous, by the intermediate value theorem, there exists $(x_{ij},y_{ij})\in[I_i^{(n)}\times{I_j^{(n)}}]$ such that $W^{(n)}_{ij} = W(x_{ij},y_{ij})$. Thus, 
\begin{align*}
    &B = \max_{i,j\in{[n]\times[n]}}\max_{x,y\in{I_i^{(n)}\times I_j^{(n)}}} \left|W(x,y)-W(x_{ij},y_{ij}) \right|.
\end{align*}


\noindent{Since $W$ is continuous on a compact set, it is uniformly continuous on $[0,1]^2$. Thus, it admits a modulus of continuity, $\gamma_2:[0,\infty)\rightarrow[0,\infty)$, which is increasing and satisfies $\lim_{s\rightarrow0}\gamma_2(s)=0$, such that
\begin{align*}
    \max_{x,y\in {I_i^{(n)}\times I_j^{(n)}}}\left|W(x,y)-W(x_{ij},y_{ij})\right| \leq \max_{x,y\in {I_i^{(n)}\times I_j^{(n)}}}  \gamma_2(|(x-x_{ij},y-y_{ij})|)\leq\gamma_2(\sqrt{2}/n).
\end{align*}}

\noindent Thus, we can bound $B$ as follows: 
\begin{align}
    \label{eqn:B-estimate}
    B = \left\|W^{(n)}(x,y)-W^{}(x,y)\right\|_{L^\infty(I^2)} \leq \gamma_2(\sqrt{2}/n),
\end{align}
where $\gamma_2(\sqrt{2}/n)\rightarrow0$ as $n\rightarrow\infty$.
Combining~\eqref{eqn:avgd-cont-estimate},~\eqref{eqn:A-estimate} and~\eqref{eqn:B-estimate} we get

\begin{align*}
&\left\|\bar{\theta}^{n}(t,x)- {\theta}^{}(t,x)\right\|_{L^\infty(I)} \leq (\gamma_1(1/n)+\gamma_2(\sqrt{2}/n)1/L)e^{Lt},\quad \forall{t\leq{T}}.
\end{align*}
Thus, for all $\epsilon>0$ there exists a $\bar{n}\in\mathbb{N}$ such that for all $n>\bar{n}$,
\begin{align*}
    \|\bar{\theta}^{n}(t,x)-\theta(t,x)\|_{L^\infty(I)}< \epsilon, \quad \forall{t\leq{T}}.
\end{align*}
\end{proof}

\subsection{Comparing the Sampled System to the Continuum System}\label{Comparing the Sampled System to the Continuum System}

\noindent Synthesizing Proposition~\ref{convgence_sampled_avged} and Proposition~\ref{convgence_avged_continuum}, we now prove Theorem~\ref{main_theorem}. 


\begin{proof}[Proof of Theorem~\ref{main_theorem}] Fix $\delta,T,\epsilon>0$ and suppose that $\frac{\log (n) / n}{\alpha_n^3} \rightarrow 0$. By Proposition~\ref{convgence_sampled_avged}, there exists $n_1\in\mathbb{N}$ such that for each $n>{n_1},$ 
$$
    \|{\theta}^{n}(t)-\bar{\theta}^{n}(t)\|_{\infty} = \|{\theta}^{n}(t,x)-\bar{\theta}^{n}(t,x)\|_{L^\infty(I)} < \frac{\epsilon}{2} \quad \forall{t\leq{T}}
$$
with probability at least $1-\delta$. By Proposition~\ref{convgence_avged_continuum}, there exists $n_2\in\mathbb{N}$ such that for all $n>n_2$,
$$
\|\bar{\theta}^{n}(t,x)-\theta(t,x)\|_{L^\infty(I)}< \frac{\epsilon}{2} \quad \forall{t\leq{T}}.
$$

\noindent
Thus, there exists $\bar{n} = \max\{n_1,n_2\}$ such that for all $n>\bar{n}$:
\begin{align*}
\|{\theta}^{n}(t,x)-{\theta}^{}(t,x)\|_{L^{\infty}(I)} &\leq \|{\theta}^{n}(t,x)-\bar{\theta}^{n}(t,x)\|_{L^\infty(I)} + \|\bar{\theta}^{n}(t,x)-\theta(t,x)\|_{L^\infty(I)}\\
&<\frac{\epsilon}{2} + \frac{\epsilon}{2}= \epsilon \quad \forall{t\leq{T}}
\end{align*}
with probability at least $1-\delta$.
\end{proof}

\section{Synchronization}\label{Synchronization}
In this section, we apply our main convergence result to study synchronization properties of identical interacting Kuramoto (\ref{phase_syncrhonization}) and Sakaguchi-Kuramoto (\ref{frequency_syncrhonization}) oscillators on Erd\H{o}s-R\'enyi random graphs. In particular, we aim at proving that synchronization in the continuum model implies synchronization in the sampled network with high probability. This is not obvious a-priori because the convergence result in Theorem~\ref{main_theorem}] holds for finite time $T$.

\subsection{Phase Synchronization in the Kuramoto Model} \label{phase_syncrhonization}

By setting $f=0$, $W=1$, and $D(\cdot)=\sin(\cdot)$ in~\eqref{eqn:continuum_dynamics} we obtain, as a special case, the dynamics of the \textit{homogeneous continuum Kuramoto model}

\begin{equation}\label{continuum_Kuramoto_model}
  \partial_t \theta(t, x)=\int_I \sin(\theta(t, y)-\theta(t, x)) d y, \quad x \in I.
\end{equation}

\noindent We aim at proving that if \eqref{continuum_Kuramoto_model} synchronizes as formalized in Assumption \ref{CKM_sync_assumption}, then synchronization also happens with high probability in the sampled network.  

\begin{assumption}\label{CKM_sync_assumption} 
Suppose that the initial condition for \eqref{continuum_Kuramoto_model}, $\theta(0, x)$, is such that 
\begin{itemize}
    \item $\theta(0, x)\in C([0,1])$, and 
    \item there exists a $c$ for which $\lim_{t\rightarrow{\infty}} \|\theta(t,\cdot)-c\|_{L^\infty(I)}=0.$
\end{itemize}
\end{assumption}

\begin{theorem}\label{Phase synch Kuramoto model on ER graphs}
Fix  $f=0$, $W=1$, $D(\cdot)=\sin(\cdot)$, and  $\frac{\log (n) / n}{\alpha_n^3} \rightarrow 0$. Suppose that $\theta(0,x)$ satisfies Assumption \ref{CKM_sync_assumption}. Let $\theta^{n}(0,x) = \sum_{i=1}^{n} \theta^{n}_i(0) \mathbbm{1}_{\left[\frac{i-1}{n},\frac{i}{n}\right]} (x)$ where ${\theta}_i^n(0) = \theta\left(0,\frac{i-1}{n}\right)$. Fix $\delta>0$. There exists some constant $\widetilde{c}$ and $\bar{n}\in\mathbb{N}$ such that for each $n>{\bar{n}}$, 
\begin{equation}\label{convergence_oscillatos_inf_norm}
\lim_{t\to\infty} \|{\theta}^{n}(t)-\widetilde{c}\|_{\infty} = 0
\end{equation}
\noindent with probability at least $1-\delta$. 
\end{theorem}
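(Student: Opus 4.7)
The plan is to synthesize three ingredients: the asymptotic phase synchronization of the continuum Kuramoto model (Lemma~\ref{continuum_Kuramoto_model_phase_synch}), the finite-time $L^\infty$ convergence of Theorem~\ref{main_theorem}, and the standard basin-of-attraction argument for the finite Kuramoto model on a connected graph, together with the connectivity of $G(n,\alpha_n)$ above the $(\log n)/n$ threshold.

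The first step is to strengthen Lemma~\ref{continuum_Kuramoto_model_phase_synch} to uniform-in-$x$ convergence $\|\theta(t,\cdot)-c\|_{L^\infty(I)}\to 0$, so that a finite horizon $T$ can be chosen with $\|\theta(T,\cdot)-c\|_{L^\infty(I)}<\pi/4$. Pointwise convergence $\theta(t,x)\to c$ is given by the lemma; I would upgrade to uniform convergence by combining $r(t)\to 1$ (which via $1-r(t)=\int_I(1-\cos(\psi(t)-\theta(t,x)))\,dx$ yields $L^1$ control on $\psi(t)-\theta(t,x)$) with an equicontinuity estimate on $\{\theta(t,\cdot)\}_{t\ge 0}$, obtained by differentiating~\eqref{new_continuum_Kuramoto_model} in $x$ to control $\theta_x$ on the eventual half-circle regime.

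Next I would invoke Theorem~\ref{main_theorem} with this $T$, tolerance $\epsilon=\pi/4$, and failure probability $\delta/2$, producing $n_3$ such that for every $n>n_3$, $\|\theta^n(T,\cdot)-\theta(T,\cdot)\|_{L^\infty(I)}<\pi/4$ with probability at least $1-\delta/2$. The triangle inequality then confines every phase $\theta_i^n(T)$ to the open arc of length less than $\pi$ centered at $c$. Separately, since $\alpha_n=\omega(\log n)/n$, the classical Erd\H{o}s--R\'enyi connectivity theorem supplies $n_4$ such that $G(n,\alpha_n)$ is connected with probability at least $1-\delta/2$ for all $n>n_4$. A union bound gives, for $n>\bar n:=\max\{n_3,n_4\}$, an event of probability at least $1-\delta$ on which both conclusions hold.

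The final step is the basin-of-attraction argument: on any connected graph, Kuramoto trajectories whose configuration is contained in an open arc of length strictly less than $\pi$ converge exponentially to phase synchrony, by the standard monotone-decrease-of-diameter Lyapunov argument used in~\cite{Ling}. Applying it with $T$ as the new initial time on the good event furnishes a (random) constant $c^n$ with $\theta_i^n(t)\to c^n$ uniformly in $i$ as $t\to\infty$, which is precisely~\eqref{convergence_oscillatos_inf_norm}. The main obstacle I anticipate is the uniform upgrade of Lemma~\ref{continuum_Kuramoto_model_phase_synch}, since pointwise convergence on a continuum of oscillators need not be uniform a priori; once that is established, the remaining pieces assemble cleanly by triangle inequality and union bound.
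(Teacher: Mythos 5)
Your proposal follows essentially the same route as the paper: choose $T$ so that the continuum solution is within $\pi/4$ of the limit $c$, apply Theorem~\ref{main_theorem} with tolerance $\pi/4$ and failure probability $\delta/2$, intersect with the event that $G(n,\alpha_n)$ is connected (probability at least $1-\delta/2$ above the threshold), and conclude via the basin-of-attraction result of \cite{Ling} since all phases then lie in an arc of length less than $\pi$. The only substantive difference is that you explicitly flag and propose to repair the pointwise-versus-uniform convergence issue in Lemma~\ref{continuum_Kuramoto_model_phase_synch}, a step the paper passes over by asserting the $L^\infty$ bound directly ``by the definition of the limit.''
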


\begin{proof} 
By Assumption \ref{CKM_sync_assumption}, there exists a constant $c$ such that $\lim_{t\rightarrow{\infty}} \|\theta(t,\cdot)-c\|_{L^\infty(I)}=0$. By the definition of the limit, there exists a time $T>0$ such that for all $t\geq{T}$
$$
\|{\theta}^{}(t,x)-c\|_{L^{\infty}(I)} < \frac{\pi}{8}.
$$
\noindent In particular, $\|{\theta}^{}(T,x)-c\|_{L^{\infty}(I)} < \frac{\pi}{4}$. Fix $\delta>0$. By Theorem \ref{main_theorem},  for $\frac{\delta}{2}>0$, there exists $\bar{n}_1\in\mathbb{N}$ such that for each $n>{\bar{n}_1},$ 

$$
\|{\theta}^{n}(T,x)-{\theta}^{}(T,x)\|_{L^{\infty}(I)} < \frac{\pi}{8}
$$ 

\noindent
with probability at least $1-{\frac{\delta}{2}}$. By the triangle inequality, for each $n>{{\bar{n}_1}},$ 
\begin{align*}
\|{\theta}^{n}(T,x)-c\|_{L^{\infty}(I)} \leq \|{\theta}^{n}(T,x)-{\theta}^{}(T,x)\|_{L^{\infty}(I)} + \|{\theta}^{}(T,x)-c\|_{L^{\infty}(I)}
 < \frac{\pi}{8} + \frac{\pi}{8} = \frac{\pi}{4}
\end{align*}
\noindent

\noindent with probability at least $1-\frac{\delta}{2}$. 


For the given choice of $W$, the sampled network coincides with an Erd\H{o}s--R\'enyi random graph \( G(n, \alpha_n) \), where the edge probability exceeds the connectivity threshold \cite{erdos1960evolution}. As a result, the probability that the Erd\H{o}s--R\'enyi graph is connected converges to $1$ as \( n \to \infty \) \cite{erdos1960evolution}. Therefore, there exists \( \bar{n}_2 \in \mathbb{N} \) such that for all \( n > \bar{n}_2 \), the probability that the random graph is connected is at least \( 1 - \frac{\delta}{2} \).

Choose $\bar{n} = \max\left({\bar{n}_1},{\bar{n}_2 }\right)$ and let $n>\bar{n}$. By the union bound, the probability that the random graph is connected and $\|{\theta}^{n}(T,x)-c\|_{L^{\infty}(I)}<\frac{\pi}{2}$ is at least $1-(\frac{\delta}{2}+\frac{\delta}{2}) = 1-\delta$.  The conclusion follows, as established in the proof of Theorem 2 in \cite{Jadbabaie}, since all of the oscillators are within $\frac{\pi}{2}$ distance of each other and the graph is connected with probability at least $1-\delta$. 
\end{proof}

\subsection{Frequency Synchronization in the Sakaguchi-Kuramoto Model} \label{frequency_syncrhonization}
We next apply our main convergence result to study the Sakaguchi-Kuramoto model~\cite{sakaguchi1986solubl}. While the results in this section focus on studying \textit{frequency synchronization}, Figure~\ref{fig2} suggests that stronger results may be attainable.

\begin{figure}[H]
\centerline{\includegraphics[width=\columnwidth]{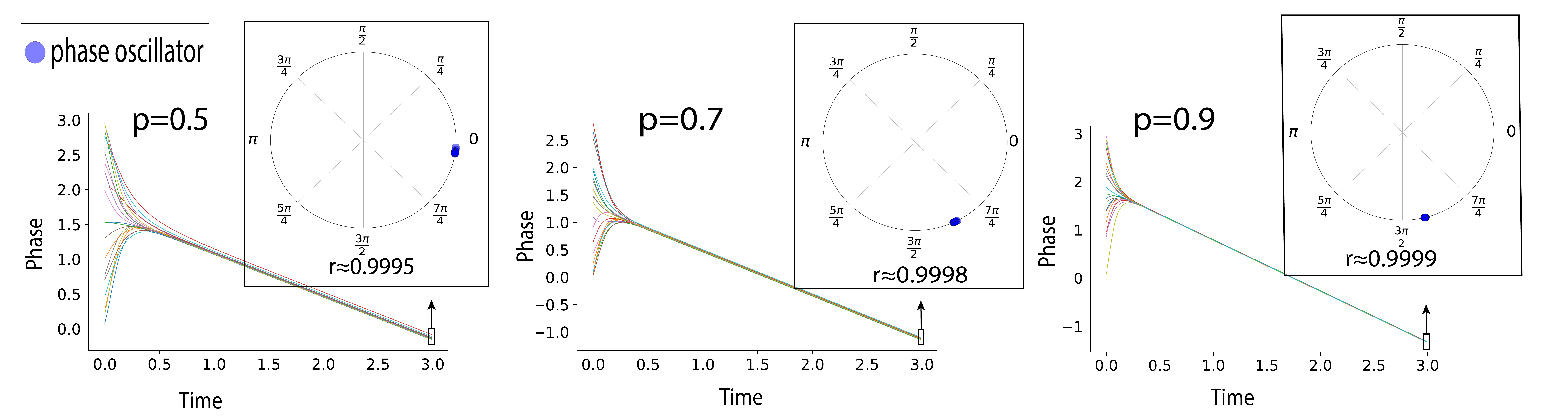}}
\caption{Snapshot of $n=20$ phase oscillators plotted on the unit circle after frequency synchronization has been achieved for Sakaguchi-Kuramoto oscillators (with $\beta=\frac{\pi}{50}$), interacting over an Erd\H{o}s-R\'enyi random network for three values of the edge probability: $p=0.5, p=0.7$, and $p=0.9$. In all three instances, the order parameter (a scalar $r\in[0,1]$ indicating how aligned the phases are \cite{Dorfler2014}) is close to $1$ and the oscillators are in nearly perfect phase alignment, suggesting a phenomenon beyond mere frequency synchronization.}
\label{fig2}
\end{figure}

Here, we set $f=0$, $W=p\in[0,1]$, and select $D(\cdot) = \sin(\cdot+\beta)$ to obtain the \textit{homogeneous continuum Sakaguchi-Kuramoto model}

\begin{equation}\label{continuum_Sakaguchi-Kuramoto_model}
  \partial_t \theta(t, x)=\int_I p \sin(\theta(t, y)-\theta(t, x)+\beta) d y, \quad x \in I
\end{equation}

\noindent
where $-\frac{\pi}{2}<\beta<\frac{\pi}{2}$. Again, we focus on initial conditions for which \eqref{continuum_Sakaguchi-Kuramoto_model} achieves phase synchronization.

\begin{assumption}\label{CSKM_sync_assumption} 
Suppose that the initial condition for \eqref{continuum_Sakaguchi-Kuramoto_model}, $\theta(0, x)$, is such that 
\begin{itemize}
    \item $\theta(0,x)\in C([0,1])$, and 
    \item there exists a $c$ for which $\lim_{t\rightarrow{\infty}} \|\theta(t,\cdot)-c\|_{L^\infty(I)}=0.$
\end{itemize}

\end{assumption}

\begin{theorem}\label{freq synch Sakaguchi Kuramoto model on ER graphs}
(Frequency Synchronization) Suppose $f=0$, $W=p\in[0,1]$, $D(\cdot) = \sin(\cdot+\beta)$ for $\beta\in(0,\frac{\pi}{2})$, and $\alpha_n=1$. Assume that $\theta(0,x)$ satisfies Assumption \ref{CSKM_sync_assumption}. Let $\theta^{n}(0,x) = \sum_{i=1}^{n} \theta^{n}_i(0) \mathbbm{1}_{\left[\frac{i-1}{n},\frac{i}{n}\right]} (x)$ where ${\theta}_i^n(0) = \theta\left(0,\frac{i-1}{n}\right)$. Fix $\delta>0$. There exists a $\bar{n}$ such that for all $n>\bar{n}$ and for any $\beta$ such that 
\begin{equation}\label{alpha_condition_for_sycnh}
\frac{\cos^{2} (\beta)}{\sin^{}(\beta)} > \frac{2}{p} \left(\frac{1+\frac{1}{n^{1/3}}}{1-\frac{1}{n^{1/3}}-\frac{2}{n}}\right),
\end{equation}
\noindent there exists some constant $\widetilde{c}$ such that  
\begin{equation}\label{freq_convergence_oscillatos_inf_norm}
\lim_{t\to\infty} \|{\dot{\theta}}^{n}(t,\cdot)-\widetilde{c}\|_{\infty} = 0
\end{equation}
\noindent with probability at least $1-\delta$. 
\end{theorem}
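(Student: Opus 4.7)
The plan is to mirror the structure of the proof of Theorem~\ref{Phase synch Kuramoto model on ER graphs}, but using the Sakaguchi--Kuramoto continuum synchronization result (Lemma~\ref{continuum_Sakagychi-Kuramoto_model_phase_synch}) in place of the Kuramoto one, and replacing the connected-graph basin-of-attraction argument of~\cite{Ling} with a Bullo-type frequency-synchronization criterion that is sensitive to the degree structure of the underlying graph. The choice $W\equiv 1$ makes the $W$-random sampling coincide with $G(n,p)$ sampling, so that Theorem~\ref{main_theorem} applies verbatim.

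\textbf{Step 1: cohesive configuration in the continuum.} By Lemma~\ref{continuum_Sakagychi-Kuramoto_model_phase_synch}, the continuum solution $\theta(t,x)$ with initial datum $\eta\in C^1(I)$ converges, as $t\to\infty$, to a constant $c$ uniformly in $x\in[0,1]$. Fix $\epsilon_0>0$ small enough that $4\epsilon_0<\tfrac{\pi}{2}-\beta$ (the hypothesis $\beta<\pi/2$ makes this possible). Then there exists a deterministic time $T>0$ with $\|\theta(T,\cdot)-c\|_{L^\infty(I)}<\epsilon_0$.

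\textbf{Step 2: transfer to the sampled system.} Apply Theorem~\ref{main_theorem} at time $T$, with $W\equiv 1\in C^1(I^2)$, tolerance $\epsilon_0$ and confidence $\delta/2$, to obtain $\bar n_1\in\mathbb{N}$ such that for every $n>\bar n_1$,
\[
\|\theta^n(T,\cdot)-\theta(T,\cdot)\|_{L^\infty(I)}<\epsilon_0
\]
with probability at least $1-\delta/2$. Combined with Step~1 and the triangle inequality, every component of $\theta^n(T,\cdot)$ lies within the arc $(c-2\epsilon_0,\,c+2\epsilon_0)$ on this event; in particular the pairwise phase differences are at most $4\epsilon_0<\tfrac{\pi}{2}-\beta$.

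\textbf{Step 3: degree concentration of $G(n,p)$.} Since $p$ is fixed in $(0,1]$, a Chernoff bound for the sum of independent Bernoullis shows that with probability tending to $1$ (in particular at least $1-\delta/2$ for $n$ large enough, say $n>\bar n_2$), every vertex degree $d_i$ satisfies $np(1-n^{-1/3})\le d_i\le np(1+n^{-1/3})$. This is precisely what produces the factor $\frac{1+n^{-1/3}}{1-n^{-1/3}}$ appearing in~\eqref{alpha_condition_for_sycnh}.

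\textbf{Step 4: basin-of-attraction argument.} We invoke the Bullo-type criterion referenced in the introduction (via \cite{Bullo}): if a Sakaguchi--Kuramoto trajectory on a graph enters the cohesive set $\{\theta:\max_{i,j}|\theta_i-\theta_j|<\tfrac{\pi}{2}-\beta\}$, and the graph's degree spread satisfies a bound equivalent to \eqref{alpha_condition_for_sycnh}, then the trajectory converges to a frequency-synchronized state $\dot\theta_i^n(t)\to w$. On the intersection of the two good events from Steps~2 and~3 (probability at least $1-\delta$ for $n>\bar n:=\max\{\bar n_1,\bar n_2\}$), the initial condition at time $T$ lies in this basin and the degree condition holds, so frequency synchronization~\eqref{freq_convergence_oscillatos_inf_norm} follows. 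The ``with probability $1$'' clause in the statement is accommodated by noting that once the Chernoff estimate is in hand, a standard Borel--Cantelli argument (since $\exp(-c n^{1/3})$ is summable) makes the degree-regularity event hold almost surely for all sufficiently large $n$.

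\textbf{Main obstacle.} The delicate step is Step~4: extracting exactly the condition \eqref{alpha_condition_for_sycnh} from a Bullo-type analysis of the Sakaguchi--Kuramoto Jacobian on a non-regular graph. Because the dynamics are not a gradient flow, one cannot appeal to a Lyapunov-function argument of the type used for the Kuramoto case; instead the sufficient condition must be derived by bounding the contraction of the induced flow inside the cohesive arc, and here the ratio of maximum to minimum degree (controlled by Step~3) is what converts the complete-graph statement $\cos^2\beta/\sin\beta>2$ into the $G(n,p)$ statement $\cos^2\beta/\sin\beta > (2/p)\cdot(1+n^{-1/3})/(1-n^{-1/3})$.
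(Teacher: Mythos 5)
Your overall architecture matches the paper's: continuum phase synchronization (Lemma~\ref{continuum_Sakagychi-Kuramoto_model_phase_synch}) gives a time $T$ at which the continuum solution is cohesive, Theorem~\ref{main_theorem} transfers this to the sampled system with probability $1-\delta/2$, and a result from \cite{Bullo} closes the argument. But your Step~3 contains a genuine gap: you derive only degree concentration, $np(1-n^{-1/3})\le d_i\le np(1+n^{-1/3})$, and claim that the ratio of maximum to minimum degree converts the complete-graph condition into \eqref{alpha_condition_for_sycnh}. That cannot work: for fixed $p$ that ratio is $\frac{1+n^{-1/3}}{1-n^{-1/3}}=1+o(1)$ and carries no factor of $2/p$. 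The paper's Lemma~\ref{Invariance_property_Sakaguchi_Kuramoto_Oscillators} instead applies Theorem~4.3 of \cite{Bullo}, whose sufficient condition for invariance of the cohesive set $\{\max_{i,j}|\theta_i-\theta_j|<\tfrac{\pi}{2}-\beta\}$ involves the quantity $\sum_{k}\min(A_{ik},A_{jk})$, i.e.\ the number of \emph{common} neighbors of $i$ and $j$. One must therefore also concentrate $\sum_k\min(A_{ik},A_{jk})$ around $np^2$ (a second Chernoff--Hoeffding plus union bound plus Borel--Cantelli step), and the ratio that actually appears is
\begin{align*}
\frac{\sum_k A_{ik}+\sum_k A_{jk}}{\sum_k\min(A_{ik},A_{jk})}\;\le\;\frac{2p(n+n^{2/3})}{p^{2}(n-n^{2/3})}\;=\;\frac{2}{p}\cdot\frac{1+n^{-1/3}}{1-n^{-1/3}},
\end{align*}
which is exactly where the $2/p$ in \eqref{alpha_condition_for_sycnh} comes from. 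Without the common-neighborhood concentration your argument does not produce the stated threshold, and the ``delicate step'' you flag in Step~4 is precisely this missing lemma rather than a Jacobian contraction estimate.

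A second, smaller omission: the final conclusion uses Theorem~4.1 of \cite{Bullo}, which requires the graph to be \emph{connected} in addition to the phases being $(\tfrac{\pi}{2}-\beta)$-cohesive at time $T$. The paper budgets $\delta/2$ for the convergence event and $\delta/2$ for connectivity of $G(n,p)$ (the invariance lemma holds almost surely for large $n$, so it costs no probability). You spend your entire $\delta$ budget on convergence and degree concentration and never verify connectivity; for fixed $p\in(0,1]$ this is easily repaired, but as written the chain of implications leading to \eqref{freq_convergence_oscillatos_inf_norm} is incomplete.
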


\begin{proof} 

Let $\mathbb{T}^n$ be the $n$-dimensional torus, and define $\Delta(\gamma) \subset \mathbb{T}^n$ as the set of angle arrays $\left(\theta_1, \ldots, \theta_n\right)$ with the property that there exists an arc of length $\gamma$ containing $\theta_1, \ldots, \theta_n$ in its interior. We start by proving in Lemma \ref{Invariance_property_Sakaguchi_Kuramoto_Oscillators}, given in the Appendix, that with probability $1$ there exists a $\bar{n}_1$ and $0<\gamma <\frac{\pi}{2}-\beta$ such that for all $n>\bar{n}_1$, if $\frac{\pi}{2}>\beta>0$ satisfies \eqref{alpha_condition_for_sycnh}, then $\Delta(\gamma)$ is forward invariant for the dynamics in (\ref{eqn:sampled_dynamics}). 

Note that by Assumption \ref{CSKM_sync_assumption}, for all $x\in[0,1]$, there exists a constant $c$ such that $\lim_{t\to\infty} \|\theta(t,\cdot) -c\|_{L^{\infty}(I)}= 0.$ Thus, there exists $T$ such that for all $t\geq{T}$,
$$
\|{{\theta}}(t,x)-c\|_{L^{\infty}(I)} < \frac{\gamma}{4}
$$
\noindent and in particular 
$$
\|{{\theta}}(T,x)-c\|_{L^{\infty}(I)} < \frac{\gamma}{4}.
$$

\noindent By Theorem \ref{main_theorem}, there exists $\bar{n}_2$ such that for all $n>\bar{n}_2$

$$
\|{{\theta}}^{n}(T,x)-{{\theta}}(T,x)\|_{L^{\infty}(I)} < \frac{\gamma}{4}.
$$
\noindent with probability at least $1-\frac{\delta}{2}$. By the triangle inequality, for all $n>\bar{n}_2$ ,
$$
\|{{\theta}}^{n}(T,x)-c\|_{L^{\infty}(I)} < \frac{\gamma}{2},
$$
with probability at least $1-\frac{\delta}{2}$ for all $x\in[0,1]$. Note that this implies that $\theta^{n}(T)\in\Delta(\gamma)$.

We are considering Erd\H{o}s--R\'enyi random graphs \( G(n,p) \) in the dense regime, and therefore, the edge probability exceeds the connectivity threshold \cite{erdos1960evolution}. As a result, the probability that the Erd\H{o}s--R\'enyi graph is connected converges to $1$ as \( n \to \infty \). Therefore, there exists \( \bar{n}_3 \in \mathbb{N} \) such that for all \( n > \bar{n}_3 \), the probability that the random graph is connected is at least \( 1 - \frac{\delta}{2} \).

Choose $\bar{n} = \max\left({\bar{n}_1},{\bar{n}_2 },{\bar{n}_3}\right)$ and let $n>\bar{n}$. The probability that the random graph is connected and $\theta^{n}(T)\in\Delta(\gamma)$ is at least $1-(\frac{\delta}{2}+\frac{\delta}{2}) = 1-\delta$. 
Since for all $n\geq\bar{n}$, $\theta^{n}(T)\in\Delta(\gamma)$, $\Delta(\gamma)$ is forward invariant, and the random graph is connected with probability at least $1-\delta$, by Theorem 4.1 in \cite{Bullo}, we have that \eqref{freq_convergence_oscillatos_inf_norm} holds with probability at least $1-\delta$.
\end{proof}

\section{Concluding Remarks}
In this work, we compare the solution of a coupled dynamical system over a $W$-random network of size $n$ to the solution of a continuous dynamical system governed by the generating graphon as $n\rightarrow\infty$. Utilizing concentration inequalities for the random adjacency matrix and regularity properties of the graphon, we establish that for finite intervals and large enough sampled graphs, the solutions of the two models remain close in the $L^\infty$ norm, with high probability.  As an application of our main result, we analyze the Kuramoto and Sakaguchi--Kuramoto models on Erd\H{o}s--R\'enyi random graphs. In the homogeneous Kuramoto case, we prove that for $G(n,\alpha_n)$ with $\frac{\log(n)/n}{\alpha_n^3} \to 0$ as $n \to \infty$, the system achieves phase synchronization with high probability, provided the continuum initial condition is continuous and asymptotically synchronizing in the $L^\infty$ norm. For the homogeneous Sakaguchi--Kuramoto model on $G(n,p)$ with fixed $p \in (0,1]$, we establish that frequency synchronization occurs with high probability for sufficiently large $n$, assuming the initial condition satisfies a similar continuity and asymptotic synchronization assumption and the phase shift $\beta$ meets an explicit inequality depending on $p$ and $n$. 


A natural question that arises from this work is whether this analysis can be extended to study synchronization properties of Kuramoto models on random networks beyond Erd\H{o}s-R\'enyi topologies. Another interesting direction is to extend the convergence analysis to sparser graphon regimes, where \( \alpha_n \) decays at the same rate as the Erd\H{o}s--R\'enyi connectivity threshold, \( \log(n)/n \).


\appendix
\numberwithin{equation}{section}
\section{Existence of Solutions for the Continuum Model}\label{appendix:existence-theorem}
We provide the proof of Theorem~\ref{thm:existence}.

\begin{proof}[Proof of Theorem~\ref{thm:existence}]
    \begin{enumerate}
        \item[(i)] This follows from Theorem 3.2 in \cite{Kaliuzhnyi-Medvedev}.
        \item[(ii)] We prove this using a fixed point argument similar to the one used for part (i). Let $I=[0,1]$ and set $X:=C([0,T]\times I;\mathbb{R})$ with the usual norm, $\|u\|_{\infty}:=\max_t\max_x u(t,x)$. For $\eta\in C(I)$, define the integral operator
        \begin{align*}
            [Ku](t,x):=\eta(x)+\int_0^t f(u(s,x))ds+\int_0^t\int_I W(x,y)D(u(s,y)-u(s,x))dyds
        \end{align*}
        First, we note that since $\eta\in C(I)$, $W\in C(I^2,\mathbb{R})$ and $f$ is periodic and continuous (hence bounded), we can conclude $K:X\rightarrow X$. Indeed: 
        \begin{itemize}
            \item For fixed $t$, $[Ku](t,\cdot)$ is continuous in $x$ since
            \begin{enumerate}
                \item[(i)] $\eta$ is continuous in $x$
                \item[(ii)] $\int_0^t f(u(s,x))ds+\int_0^t\int_I W(x,y)D(u(s,y)-u(s,x))dyds$ is continuous in $x$ by Lemma~\ref{lemma:partial-integration-continuity} since the integrand is continuous.
            \end{enumerate}
            \item $[Ku](\cdot,x)$ is Lipschitz continuous in $t$ with Lipschitz constant independent of $x$. This is because by the fundamental theorem of calculus, for fixed $x$, $\partial_t[Ku(t,x)]=f(u(t,x))+\int_I W(x,y)D(u(t,y)-u(t,x))dy$ and the uniform Lipschitz constant is therefore 
            \begin{align*}
                L\leq\max_{x\in I}\max_{t\in[0,T]}|\partial_t[Ku(t,x)]|\leq \|f\|_{\infty}+\|D\|_{\infty}<\infty,
            \end{align*}
            which is independent of $x$.
        \end{itemize}
        By Lemma~\ref{lemma:joint-continuity}, $Ku\in X$.
    \end{enumerate}

    Next, we show that $K$ is Lipschitz and in particular, a contraction for $T<\frac{1}{L_f+2L_D}=:\overline{T}$:
    \begin{align*}
        \|Ku-Kv\|_\infty=&\max_{t\in[0,T]}\max_{x\in I}\Big|\int_0^t f(u(t,x)) dt - \int_0^t f(v(t,x)) dt \\
        &+\int_0^t\int_I W(x,y)D(u(s,y)-u(s,x))dyds- \int_0^t\int_I W(x,y)D(v(s,y)-v(s,x))dyds\Big|\\
        \leq&\max_{t\in[0,T]}\max_{x\in I}\Big[\int_0^t|f(u(t,x))-f(v(t,x))|dt+\int_0^t\int_I|W(x,y)||D(u(s,y)-u(s,x))\\
        &-D(v(s,y)-v(s,x))|dyds\Big]\\
        \leq&\max_{t\in[0,T]}\max_{x\in I}\Big[\int_0^tL_f|u(t,x)-v(t,x)|dt\\
        &+L_D\int_0^t\int_I|u(s,y)-v(s,y)|+|u(s,x)-u(s,x))|dyds\Big]\\
        &\leq(L_f+2L_D)T\|u-v\|_{\infty}.
    \end{align*}
Applying Banach's fixed point theorem, we get the existence of a unique fixed point $\theta\in X$ satisfying $K\theta = \theta$, i.e.,
\begin{align*}
    \theta(t,x)=\eta(x)+\int_0^t f(\theta(s,x))ds+\int_0^t\int_I W(x,y)D(\theta(s,y)-\theta(s,x))dyds.
\end{align*}
Differentiating the above expression with respect to $t$, we see that $\theta$ is the unique solution to~\eqref{eqn:continuum_dynamics} (since we know from part (i) that the solution is unique). Finally, since
\begin{align*}
    \partial_t\theta(t,x) = f(\theta(t,x))+\int_I W(x,y)D(\theta(t,y)-\theta(t,x))dy
\end{align*}
we may conclude that $\partial_t\theta\in C([0,T]\times[0,1],\mathbb{R})$ since $f(\theta(t,x))$ is the composition of two continuous functions and the second term is continuous due to Lemma~\ref{lemma:partial-integration-continuity}.
\end{proof}

\begin{remark}
    Global-in-time existence for~\eqref{eqn:continuum_dynamics} follows from Theorem~\ref{thm:existence} noticing that the maximal time interval of existence, $\overline{T}$, only depends on the Lipschitz constants of $f$ and $D$. Thus the solution can be extended indefinitely by restarting the flow at $t=\overline{T}$, $t=2\overline{T}$ and so on.
\end{remark}

\begin{lemma}\label{lemma:partial-integration-continuity}
    Let $U,V\subset\mathbb{R}^n$ be non-empty, bounded open sets and suppose $g:\overline{U\times V}\rightarrow\mathbb{R}$ is continuous. Then, $h:\overline{U}\rightarrow\mathbb{R}$ defined by $h(x):=\int_V g(x,y) dy$ is continuous.
\end{lemma}
\begin{proof}
    Let $x_0\in \overline{U}$ and let $\epsilon>0$. Since $g$ is continuous, we can find $\delta$ such that $|x-x_0|\leq\delta\implies|g(x,y)-g(x_0,y)|\leq \frac{\epsilon}{|V|}$, then for all $x$ such that $|x-x_0|\leq\delta$ we have
    \begin{align*}
        |h(x)-h(x_0)|&=\left|\int_V g(x,y)dy-\int_V g(x_0,y)dy\right|\\
        &=\left|\int_V g(x,y)-g(x_0,y)dy\right|\\
        &\leq\int_V |g(x,y)-g(x_0,y)|dy\\
        &\leq\frac{\epsilon}{|V|}\int_V dy = \epsilon.
    \end{align*}
\end{proof}

\begin{lemma}\label{lemma:joint-continuity}
    Suppose that $g:[0,T]\times I\rightarrow \mathbb{R}$ satisfies
    \begin{enumerate}
        \item[(i)] $g(t,\cdot)$ is continuous in $x$ for all fixed $t$,
        \item[(ii)] $g(\cdot,x)$ is Lipschitz continuous in $t$ with constant $L$ independent of $x$,
    \end{enumerate}
    then $g$ is jointly continuous.
\end{lemma}
\begin{proof}
    Fix $(t,x)$ and let $\epsilon>0$. Let $\delta_1(t)$ be such that $|f(t,x')-f(t,x)|\leq\epsilon/2$ and let $\delta:=\min\{\delta_1(t),\frac{\epsilon}{2L}\}$.  Then $|(t',x')-(t,x)|\leq \delta\implies |t'-t|\leq\frac{\epsilon}{2L}$ and $|x'-x|\leq\delta(t)$. We then see that
    \begin{align*}
        |(t',x')-(t,x)|\leq \delta\implies |f(t',x')-f(t,x)|&\leq |f(t',x')-f(t,x')|+|f(t,x')-f(t,x)|\\
        &\leq L|t-t'|+\epsilon/2\leq \epsilon.
    \end{align*}
\end{proof}

We state and prove the following lemma that we use in the main text,
\begin{lemma}\label{lemma:difference-of-solutions-continuity}
    Let $\bar\theta^n(t,x)$ denote the solution to~\eqref{averaged_dynamics} expressed as in~\eqref{eq:averaged-solution-piecewise-interpolant} and let $\theta(t,x)$ denote the solution to~\eqref{eqn:continuum_dynamics}. Then the function $f(t):=\max_{x\in I}|\bar\theta^n(t,x)-\theta(t,x)|$ is continuous in $t$.
\end{lemma}
\begin{proof}
    By Theorem~\ref{thm:existence}, $\theta(t,x)$ is continuous in $[0,T]\times I$. Since $\bar\theta^n$ is defined as a step function whose value on each subinterval $I^n_i$ is given by the solution $\theta^n_i(t)$ to the ODE system,~\eqref{averaged_dynamics}, it is continuous on $[0,T]\times I^n_i$ for each $i=1,...,n$. Now define $f_i(t,x)=|\bar\theta^n(t,x)-\theta(t,x)|:[0,T]\times I_i^n\rightarrow\mathbb{R}$ and note that $f_i$ is continuous on $[0,T]\times I^n_i$. By Berge's maximum theorem, $f_i(t):=\max_{x\in I_i} f_i(t,x)$ is continuous on $[0,T]$. Since $f(t)=\max_i f_i(t)$, we conclude that $f(t)$ is also continuous.
\end{proof}
\section{Concentration of Sampled Adjacency Matrix}
\begin{lemma}\label{Upper Bounding Sampled Adjacency Matrix Part 1}
Let \( A^{(n)}_{ij} \sim \operatorname{Ber}\left(\alpha_n W^{(n)}_{ij}\right) \), and define
\[
T_{ij} = \left(\frac{A^{(n)}_{ij}}{\alpha_n} - W^{(n)}_{ij}\right) \beta_{ij}, \quad T_i = \frac{1}{n} \sum_{j=1}^n T_{ij},
\]
where \( \beta_{ij} \in \mathbb{R} \) are deterministic coefficients satisfying \( |\beta_{ij}| \leq M \) for all \( i, j \). Suppose \( \frac{\log(n)/n}{\alpha_n} \to 0 \) as \( n \to \infty \) and \( \alpha_n \leq 1 \). Then for all \( \epsilon, \delta > 0 \), there exists \( \bar{n} \) such that for all \( n > \bar{n} \),
\[
\mathbb{P}\left[\left|T_i\right| \leq \epsilon, \ \forall i\right] \geq 1 - \delta.
\]
\end{lemma}

\begin{proof} 
Note that for fixed \( i \), \( \{T_{ij}\}_{j=1}^n \) are independent, zero-mean random variables, implying \( \mathbb{E}[T_i] = 0 \). Moreover, for all \( j \),
\[
|T_{ij}| = \left| \frac{A^{(n)}_{ij}}{\alpha_n} - W^{(n)}_{ij} \right| |\beta_{ij}| \leq \frac{1}{\alpha_n} |\beta_{ij}| < \frac{M}{\alpha_n}.
\]

Bounding the variance of $T_{ij}$:
\begin{align*}
\mathbb{V}[T_{ij}] \leq \mathbb{E}[T_{ij}^2] &= \mathbb{E}\left[\left( \frac{A^{(n)}_{ij}}{\alpha_n} - W^{(n)}_{ij} \right)^2 \right] \cdot (\beta_{ij})^2 \\
&= \left[ \alpha_n W^{(n)}_{ij} \left( \frac{1}{\alpha_n} - W^{(n)}_{ij} \right)^2 + (1 - \alpha_n W^{(n)}_{ij}) (W^{(n)}_{ij})^2 \right] \cdot (\beta_{ij})^2 \\
&\leq \left[ \alpha_n \left(\frac{1}{\alpha_n} - W^{(n)}_{ij} \right)^2 + \left(1 - \alpha_n W^{(n)}_{ij} \right) (W^{(n)}_{ij})^2 \right] M \\
&\leq \left[ \frac{\alpha_n}{\alpha_n^{2}} \left(1 - \alpha_n W^{(n)}_{ij} \right)^2 + \left(1 - \alpha_n W^{(n)}_{ij} \right) W^{(n)}_{ij} \right] M \\ 
&= \left[\left(1-\alpha_nW^{(n)}_{{ij}}\right) \left(\frac{1}{\alpha_n^{}} \left(1 - \alpha_n W^{(n)}_{ij} \right) +  W^{(n)}_{ij}\right) \right] M\\
&\leq  \left[\frac{1}{\alpha_n} - W^{(n)}_{ij} + W^{(n)}_{ij} \right] M = \frac{M}{\alpha_n}.
\end{align*}

Using Bernstein's inequality:

\[
\mathbb{P}\left[\left|T_i\right|\geq t\right] = \mathbb{P}\left[\left|\frac{1}{n} \sum_{j=1}^{n} T_{i j}\right| \geq t\right] =  \mathbb{P}\left[\left|\sum_{j=1}^{n} T_{i j}\right| \geq tn\right] \leq 2 \exp \left(-\frac{\frac{1}{2}  t^{2} n^2}{n  \frac{M}{\alpha_n}+\frac{1}{3} \frac{M}{\alpha_n}  t n}\right).
\]

Set $t^{2}$ as follows: 

\begin{align*}
t^2=\frac{4 M \log (2 n / \delta)}{n  \alpha_n}  =4 M\left(\frac{\log (n)+\log (2 / \delta)}{n \alpha_n}\right) =4 M\left(\frac{\log (n)}{n \alpha_n}+{\frac{\log (2 / \delta)}{n\alpha_n}}\right).
\end{align*}

As $n\rightarrow{\infty}$, $\frac{\log (n)}{n \alpha_n},\frac{\log (2 / \delta)}{n\alpha_n}\rightarrow{0}$. Hence $t^2\rightarrow{0}$ as $n\rightarrow{\infty}$. Set $n$ large enough such that $t<3$. Then, 

$$
\mathbb{P}\left[\left|T_i\right| \geq t\right] \leq 2 \exp \left(-\frac{\frac{1}{2} t^2 n^2}{2 \frac{n M}{\alpha_n}}\right)=2 \exp \left(-\frac{1}{4 M} \frac{t^2 n^2}{\frac{n}{\alpha n}}\right).
$$

Plugging in $t^2$: 

$$
\mathbb{P}\left[\left|T_i\right| \geq t\right] \leq \frac{\delta}{n}.
$$

By the union bound, for $n$ sufficiently large, with probability at least $1-\delta$, 
$$\left|T_i\right| \leq t=\sqrt{\frac{4 M \log (2 n / \delta)}{n \alpha_n}} \leq \epsilon\quad{\forall{i}}.$$
\end{proof}

\begin{lemma}\label{Upper Bounding Sampled Adjacency Matrix Part 2}
For all $\epsilon,\delta>0$ there exists an $\bar{n}$ such that for all $n>\bar{n}$ with probability $1-\delta:$
$$
\frac{1}{n \alpha_n} \sum_{j=1}^n A^{(n)}_{i j} \leq(1+\epsilon) \quad \text{ for all $i$}.
$$
\end{lemma}

\begin{proof}
This proof follows from Lemma \ref{Upper Bounding Sampled Adjacency Matrix Part 1}. Set $\beta_{ij}=1$ for all $i,j$. With probability at least $1-\delta$, there exists a $\bar{n}$ such that for all $n>\bar{n}$ and all $i$:

\begin{align*}
\left|T_i\right| \leq \epsilon  & \Rightarrow\left|\frac{1}{n} \sum_{j=1}^{n}\left(\frac{A^{(n)}_{i j}}{\alpha_n}-W^{(n)}_{i j}\right)\right| \leq \epsilon \Rightarrow \frac{1}{n \alpha_n} \sum_{j=1}^{n} A^{(n)}_{i j} \leq \epsilon+\frac{1}{n} \sum_{j=1}^{n} W^{(n)}_{i j} \leq 1+\epsilon.
\end{align*}
\end{proof}
\section{Positive Dynamical System}

\begin{lemma}\label{Positive Dynamical System}
Suppose \( \dot{u}(t) \leq_e B u(t) + \omega(t) \), with \( u(0) = u_0 \in \mathbb{R}^n \), and \( B \geq_e \mathbf{0} \). Define 
\[
\dot{s}(t) = B s(t) + \omega(t), \quad s(0) = u_0.
\]
Then for all \( t \geq 0 \), 
\[
u(t) \leq_e s(t).
\]
\end{lemma}

\begin{proof} Let $x(t)=s(t)-u(t)$ and observe,
\begin{equation}\label{positive_dyn_system}
\dot{x}(t) = \dot{s}(t) - \dot{u}(t) \geq_e B s(t) + \omega(t) - B u(t) - \omega(t) = B x(t),
\end{equation}
with \( x(0) = u_0 - u_0 = 0 \).

We show that \eqref{positive_dyn_system} is a positive system by proving that $\mathbb{R}^{n}_{\geq{0}}$ is positively invariant \cite{hirsch2012differential}. Suppose that there exists a $t$ and $i$ such that $x_{i}(t)=0$, then 

$$\dot{x}_i(t) \geq[B x(t)]_i=\sum_{j = 1}^{n} \underbrace{B_{i j}}_{\geq{0}}\underbrace{x_j(t)}_{\geq{0}} \geq 0 .$$

Hence, $x_i(t)$ cannot become negative. Thus, $x(t)=s(t)-u(t)\geq_{e} \mathbf{0}$ implying $u(t)\leq_{e} s(t)$ for all $t$. 
\end{proof}

\section{Element-wise inequalities for matrices}
\begin{lemma}\label{Infinity Norm Bounds for Nonnegative Matrices}
    Suppose $B\geq_{e}\mathbf{0}$. Then for a non-negative integer, $k$
\begin{enumerate}
        \item $\left\|B^k\right\|_{\infty} \leqslant\|B\|_{\infty}^k$, and 
        \item  $\left\|e^B\right\|_{\infty} \leq e^{\|B\|_{\infty}}$.
    \end{enumerate}
\end{lemma}

\begin{proof}
First, note that $B\geq_{e}\mathbf{0}\implies B^{2}\geq_{e} \mathbf{0}$. We proceed by induction on \( k \).

Base case (\( k = 1 \)): This is immediate, since
\[
\|B^1\|_{\infty} = \|B\|_{\infty} = \|B\|_{\infty}^1.
\]

Inductive step: Assume the result holds for some \( k \geq 1 \), i.e.,
\[
\|B^k\|_{\infty} \leq \|B\|_{\infty}^k.
\]

We show it holds for \( k+1 \). Note that
\begin{align*}
\|B^{k+1}\|_{\infty}
&= \|B^k B\|_{\infty}
= \max_{i} \sum_{j=1}^{n} \left[ B^k B \right]_{ij} \\
&= \max_{i} \sum_{j=1}^{n} \sum_{\ell=1}^{n} B^k_{i\ell} B_{\ell j}
= \max_{i} \sum_{\ell=1}^{n} B^k_{i\ell} \left( \sum_{j=1}^{n} B_{\ell j} \right) \\
&\leq \max_{i} \sum_{\ell=1}^{n} B^k_{i\ell} \|B\|_{\infty}
= \|B\|_{\infty} \cdot \max_{i} \sum_{\ell=1}^{n} B^k_{i\ell}
= \|B\|_{\infty} \cdot \|B^k\|_{\infty} \\
&\leq \|B\|_{\infty} \cdot \|B\|_{\infty}^k
= \|B\|_{\infty}^{k+1}.
\end{align*}

By induction, the result holds for all \( k \geq 1 \). Now, using this result,
    \begin{align*}
    \left\|e^B\right\|_{\infty} = \max_{i}\left[\sum_{k=0}^{\infty} \frac{1}{k!} B^k \mathbf{1}\right]_i \leq \sum_{k=0}^{\infty} \frac{1}{k!} \max_{i_k}\left[B^k \mathbf{1}\right]_{i_k} = \sum_{k=0}^{\infty} \frac{1}{k!} \|B^{k}\|_{\infty} \leq \sum_{k=0}^{\infty} \frac{1}{k!} \|B^{}\|_{\infty}^{k}= e^{\|B\|_{\infty}}.
    \end{align*}
 \end{proof}


\begin{lemma}\label{Monotonicity of Matrix Powers and Exponential}

If $\mathbf{0} \leq_e M_1 \leq_e M_2$, then 
\begin{enumerate}
    \item $M_1^k \leq_{e} M_2^k$ for all $k$, and
    \item $e^{M_1} \leq_e e^{M_2}$.
\end{enumerate}
    
\end{lemma}

\begin{proof}
We proceed by induction on \( k \). The base case, \( k = 1 \), holds by assumption: $M_1^1 = M_1 \leq_e M_2 = M_2^1$. For the inductive step, suppose that \( M_1^k \leq_e M_2^k \) for some \( k \geq 1 \). We want to show that \(M_1^{k+1} \leq_e M_2^{k+1} \). Note that $M_1^{k+1} = M_1^k M_1, M_2^{k+1} = M_2^k M_2$.
We use the following fact: If \( A, B, C \geq_e 0 \) and \( A \leq_e B \), then
\[
AC \leq_e BC \quad \text{and} \quad CA \leq_e CB.
\]
This follows from: $(AC)_{ij} = \sum_{\ell = 1}^{n} A_{i\ell} C_{\ell j} \leq \sum_{\ell=1}^{n} B_{i\ell} C_{\ell j} = (BC)_{ij}$ since all terms are nonnegative. Applying this to our case:
\[
M_1^k M_1 \leq_e M_2^k M_1 \leq_e M_2^k M_2.
\]
The first inequality holds by the inductive hypothesis \( M_1^k \leq_e M_2^k \) and \( M_1 \geq_e 0 \), and the second by \( M_1 \leq_e M_2 \) and \( M_2^k \geq_e 0 \). Thus, $M_1^{k+1} \leq_e M_2^{k+1}$. By induction, we conclude that $M_1^k \leq_e M_2^k$ for all $k \in \mathbb{N}.$

Using this fact, 
$$e^{M_1}=\sum_{k=0}^{\infty} \frac{1}{k!}\left(M_1\right)^k \leq_{e} \sum_{k=0}^{\infty} \frac{1}{k!}\left(M_2\right)^k=e^{M_2}.$$
\end{proof}

\section{Upper Bound over Finite Time Interval on Sampled Network}

\begin{lemma}\label{Upper Bound over Finite Time Interval on Sampled Network}
Let $Y_i=\int_0^T\left(g_{n,i}(\tau)\right)^2 d \tau$ where $g_{n,i}(\cdot)$ is defined in Proposition \ref{convgence_sampled_avged}. If $\frac{\log (n) / n}{\alpha_n^3} \rightarrow 0$, then for all $\epsilon,\delta>0$ there exists $\bar{n}$ such that for all $n\geq\bar{n}$, 
$$\mathbb{P}[Y_i \leq \epsilon, \forall{i}] \geq 1-\delta.$$
\end{lemma}

\begin{proof}
    Fix any $h\in\{1,\dots,n\}$ and note, 
    \begin{align*}
    Y_h=\int_0^T\left(g_{n,h}(t)\right)^2 dt &= \int_0^T\left(\frac{1}{n} \sum_{j=1}^{n}\underbrace{\left(\frac{A^{(n)}_{h j}}{\alpha_n}-W^{(n)}_{h j}\right)}_{X^{h}_j} \underbrace{D\left(\bar{\theta}_j(t)-\bar{\theta}_h(t)\right)}_{\beta_{h j}(t)}\right)^2 d t \\
    &= \int_0^T\left(\frac{1}{n} \sum_{j=1}^{n} X^{h}_j \beta_{h j}(t)\right)^2 d t\\
    &= \int_0^T \frac{1}{n^2} \sum_{j=1}^{n} \sum_{k=1}^{n} X^{h}_j X^{h}_k  \beta_{h j}(t) \beta_{h k}(t) d t \\
    &=  \frac{1}{n^2} \sum_{j=1}^{n} \sum_{k=1}^{n} X^{h}_j X^{h}_k  \underbrace{\int_0^T \beta_{h j}(t) \beta_{h k}(t) d t}_{\gamma^{h}_{jk}}\\
    & = \frac{1}{n^2} \sum_{j=1}^{n} \sum_{k=1}^{n} X^{h}_j X^{h}_k  \gamma^{h}_{j k}=: f^{h}\left(X_1^{h}, \ldots, X_n^{h}\right)
    \end{align*}

where $\gamma^{h}_{jk} = \gamma^{h}_{kj}$, $|\gamma^{h}_{jk}|\leq{T}$,$|X^{h}_j|\leq \frac{1}{\alpha_n}$, and $\mathbb{E}\left[(X^h_j)^2\right]\leq\frac{1}{\alpha_n}$ (see Lemma \ref{Upper Bounding Sampled Adjacency Matrix Part 1}). Note, 

$$\mathbb{E}\left[Y_h\right]=\frac{1}{n^2} \sum_{j=1}^{n} \sum_{k=1}^{n} \mathbb{E}\left[X^{h}_j X^{h}_k\right] \gamma^{h}_{j k}=\frac{1}{n^2} \sum_j \mathbb{E}\left[(X^{h}_j)^2\right] \gamma^{h}_{j j}\leq\frac{1}{n^2}  \frac{n}{\alpha_n}  T=\frac{T}{\alpha_n  n}.$$

So, $\mathbb{E}\left[Y_h\right]\rightarrow{0}$ as  $n\rightarrow{\infty}$ since $\frac{1}{n\alpha_n}\leq\frac{\log(n)}{n\alpha^{3}_n}\rightarrow{0}$. We next study 
the concentration of $Y_h$ around it's mean. To this end, we use McDiarmid's Inequality (Bernstein's form), and therefore need to compute:

\begin{itemize}
    \item $B^h:=\max _{i} \sup _{X^h_i}\left|f\left(X^h_1, \ldots,X^h_i, \ldots, X^h_n\right)-\mathbb{E}_{X_i} f\left(X^h_1, \ldots, X^h_i,\ldots, X^h_n\right)\right|$, 
    \item $V^h_i:=\sup _{X^h_i} \mathbb{E}_{X^h_i}\left(f\left(X^h_1, \ldots, X^h_i, \ldots, X^h_n\right)-\mathbb{E}_{X^h_i} f\left(X^h_1, \ldots, X^h_i, \ldots, X^h_n\right)\right)^2$, and
    \item ${(\sigma^h)}^2:=\sum_{i=1}^n V_i^{h}$.
\end{itemize}

In the following we omit the superscript $h$ for simplicity. To compute $B, V_i$, and $\sigma^2$, note that for all $i$,
\begin{align*}
    &f^{}\left(X_1, \ldots, X_n\right)  = \frac{1}{n^2}\left[\sum_{k=1}^{n} X_i X_k \gamma^{}_{ik}+\sum_{j \neq i} \sum_{k=1}^{n} X_j X_k \gamma^{}_{j k}\right] =\frac{1}{n^2}\left[X_i^2 \gamma_{i i}+2 \sum_{k \neq i} X_i X_k \gamma_{i k}+\sum_{j \neq i} \sum_{k \neq i} X_j X_k \gamma^{}_{j k}\right]
\end{align*}

Hence, 
$$\mathbb{E}_{X_i}[f(X)]=\frac{1}{n^2}\left[\mathbb{E}\left[X_i^2\right] \gamma_{i i}+\sum_{j \neq i} \sum_{k \neq i} X_j X_k \gamma_{j k}\right]$$ 

implies,

$$f(X)-\mathbb{E}_{X_i}[f(X)] = \frac{1}{n^2}\left[X_i^2 \gamma_{i i}+2 \sum_{k \neq i} X_i X_k \gamma_{i k}-\mathbb{E}\left[X_i^2\right] \gamma_{i i}\right].$$

Thus, 

$$
B \leq \frac{1}{n^2}\left[\frac{T}{\alpha_n^2}+2(n-1) \frac{T}{\alpha_n^2}+\frac{T}{\alpha_n^2}\right]=\frac{2 n  T}{n^2 \alpha_n^2}=\frac{2 T}{n \alpha_n^2}.
$$

Moreover,
\begin{align*}
\mathbb{E}_{X_i}\left(f(X)-\mathbb{E}_{X_i} f(X)\right)^2 &= \frac{1}{n^4}\mathbb{E}_{X_i}\left[\left(X_i^2 \gamma_{i i}+2 \sum_{k \neq i} X_i X_k \gamma_{i k}-\mathbb{E}\left[X_i^2\right] \gamma_{i i}\right)^2\right]\\
&= \frac{1}{n^4} \mathbb{E}_{X_i} \bigg[
    X_i^4 \gamma_{ii}^2
  + \left(2 \sum_{k \neq i} X_i X_k \gamma_{ik}\right)^2
  + \left(\mathbb{E}[X_i^2] \gamma_{ii}\right)^2 \\
&\qquad
  + 4 X_i^2 \gamma_{ii} \left(\sum_{k \neq i} X_i X_k \gamma_{ik}\right)
  - 2 X_i^2 \gamma_{ii}^2 \mathbb{E}[X_i^2]
  - 4 \mathbb{E}[X_i^2] \gamma_{ii} \sum_{k \neq i} X_i X_k \gamma_{ik}
\bigg]\\
&= \frac{1}{n^4} \left[\gamma_{i i}^2 \mathbb{E}\left[X_i^4\right]+4 \mathbb{E}\left[X_i^2\right]\left(\sum_{k \neq i} X_k \gamma_{i k}\right)^2-\mathbb{E}\left[X_i^2\right]^2 \gamma_{i i}^2 +4 \mathbb{E}\left[X_i^3\right] \gamma_{i i}\left(\sum_{k \neq i} X_k \gamma_{i k}\right)-0\right]\\
&\leq \frac{T^2}{n^4}\left[\frac{2}{\alpha_{n}^3}+ \frac{4\cdot2}{\alpha_{n}}\left(n \frac{1}{\alpha_{n}}\right)^2+\left(\frac{2}{\alpha_{n}}\right)^2+ \frac{4 \cdot 2}{\alpha_{n}^2}\left(\frac{1}{\alpha_{n}}\right)n\right]\\
&\leq 8 \frac{T^2}{n^4}\left[\frac{1}{\alpha_{n}^3}+\frac{1}{\alpha_{n}}\left(n \frac{1}{\alpha_{n}}\right)^2+\left(\frac{1}{\alpha_{n}}\right)^2+\frac{1}{\alpha_{n}^2}\left(\frac{1}{\alpha_{n}}\right)n\right]\\
&= \frac{8 T^2}{n^4}\left[\frac{1}{\alpha_{n}^3}+\frac{1}{\alpha_{n}^3} \cdot n^2+\frac{1}{\alpha_{n}^2}+\frac{1}{\alpha_{n}^3} n\right] \leq \frac{8\cdot{4}T^{2}}{n^2\alpha_{n}^{3}}.
\end{align*}
 
where, we used the following result: 

\begin{align*}
\left|\mathbb{E}\left[X_i^l\right]\right| & =\left|\mathbb{E}\left[\left(\frac{A^{(n)}_{hi}}{\alpha_n}-W^{(n)}_{hi}\right)^l\right]\right| \\
&=\left|{\alpha_n} {W}^{(n)}_{hi}\left(\frac{1}{\alpha_n}-W^{(n)}_{hi}\right)^l+\left(1-\alpha_n W^{(n)}_{hi}\right)\left(-W^{(n)}_{hi}\right)^l\right|\\
&\leq\frac{\alpha_n}{\alpha_n^l} W^{(n)}_{hi}(1-\alpha_n W^{(n)}_{hi})^l+{\left(1-\alpha_n W^{(n)}_{hi}\right)(W_{hi}^{(n)})^{l}}\\
&\leq \frac{1}{\alpha_n^{l-1}}+1=\frac{1+\alpha_n^{l-1}}{\alpha_n^{l-1}} \leq \frac{2}{\alpha_n^{l-1}} .
\end{align*}

Overall, we obtain : 
$$
(\sigma^h)^2=\sum_{i=1}^{n} V^h_i \leq \frac{K}{n \alpha_{n}^3}, \text{ and } B^h \leq \frac{2 T}{n \alpha_{n}^2},
$$

where $K = 32T^{2}$. Now, using McDiarmid's Inequality (Bernstein's form), for any fixed $h\in\{1,\cdots,n\}$ and any  $t<\frac{3}{\alpha_n}$: 

\begin{align*}
\mathrm{P}\left(f^h\left(X^h_1, \ldots, X^h_n\right)-\mathbb{E}\left[f^h\left(X^h_1, \ldots, X^h_n\right)\right] \geq t\right) \leq \exp \left(-\frac{t^2}{2\left(\frac{K}{\alpha_{n}^3 n}+\frac{2T}{n \alpha_{n}^2} \cdot \frac{t}{3}\right)}\right) \leq \exp \left(-\frac{t^2}{(2K+4T) \frac{1}{\alpha_{n}^3 n}}\right).
\end{align*}

Setting $t^{2} =\frac{\log (n / \delta)(2K+4T)}{\alpha_n^3 n}$, 

\begin{align*}
\mathrm{P}\left(f^h\left(X^h_1, \ldots, X^h_n\right)-\mathbb{E}\left[f^h\left(X^h_1, \ldots, X^h_n\right)\right] \geq t\right)\leq \frac{\delta}{n}.
\end{align*}

Recall $Y^h = f^h\left(X^h_1, \ldots, X^h_n\right)$. By the union bound, over $h\in\{1,\dots{n}\}$

\begin{align*}
\mathrm{P}\left(Y^h-\mathbb{E}\left[Y^h\right] < t \quad \forall{h}\right)\geq 1-\delta.
\end{align*}

With this probability, $Y^h\leq\mathbb{E}\left[Y^h\right]+t$. We obtain the desired result since when $\frac{\log(n)/n}{\alpha_n^{3}}\rightarrow{0}$ as $n\rightarrow{\infty}$ the $\mathbb{E}\left[Y^h\right]+t$ can be made arbitrarily small (less than $\epsilon$) for $n$ sufficiently large. 

\end{proof}

\section{Invariance Property for Sakaguchi-Kuramoto Oscillators on Erd\H{o}s-R\'enyi Random Graphs}
\begin{lemma}\label{Invariance_property_Sakaguchi_Kuramoto_Oscillators}
Fix $p\in(0,1]$, $f=0$, $W=p$, and $D(\cdot) = \sin(\cdot+\beta)$. With probability $1$ there exists a $\bar{n}$ such that for all $n>\bar{n}$, if $\frac{\pi}{2}>\beta>0$ satisfies \eqref{alpha_condition_for_sycnh}, then there exists a $0<\gamma <\frac{\pi}{2}-\beta$ such that: If $|{\theta}_i^{n}(0)-{\theta}_j^{n}(0)|<\gamma$ for all $i,j$, then $|{\theta}_i^{n}(t)-{\theta}_j^{n}(t)|<\gamma$ for all $t\geq{0}$ for the dynamics in (\ref{eqn:sampled_dynamics}).
\end{lemma}
 
\begin{proof}
For fixed $W=p\in(0,1]$, $A^{(n)}$ is such that $A_{ij}^{(n)} = A_{ji}^{(n)}= \textrm{Ber}(p)$. By the Chernoff-Hoeffding inequality \cite{Medvedev_main}, for any $i\in\{1,\dots,n\}$

\begin{align*}
    P\left(\left|\sum_{k=1}^{n}A^{(n)}_{ik}-np\right|\geq n^{\frac{2}{3}}p\right)\leq 2\exp\left(\frac{{-2\left(n^{2/3}p\right)^{2}}}{n}\right) = 2\exp\left(-2p^{2}n^{1/3}\right). 
\end{align*}
\noindent By the union bound, 
\begin{align*}
    P\left(\left|\sum_{k=1}^{n}A^{(n)}_{ik}-np\right|< n^{\frac{2}{3}}p, \quad\forall{i} \right)\geq 1-2n\exp^{-2p^{2}n^{1/3}}. 
\end{align*}

\noindent Since $\sum_{n}2n\exp\left(-2p^{2}n^{1/3}\right)<\infty$, by the Borel–Cantelli lemma, with probability $1$, there exists a $\bar{n}_1$ such that for all $n\geq\bar{n}_1$,

\begin{align}\label{first_bound_invariance_property}
\left|\sum_{k=1}^{n}A^{(n)}_{ik}-np\right|\leq n^{\frac{2}{3}}p \text{ $\forall{i}$ } \implies \sum_{k=1}^{n}A^{(n)}_{ik}\leq p (n+n^{2/3}) \text{ $\forall{i}$ }.
\end{align}

\noindent Next note that for fixed $i\neq j$, $M_k= \min(A^{(n)}_{ik},A^{(n)}_{jk})$ is a Bernoulli random variable with mean $p^2$. Moreover, $M_k$ are independent for each $k\neq{i,j}$. Hence, by the Chernoff-Hoeffding inequality \cite{Medvedev_main} once again, 

\begin{align*}
    P\left(\left|\sum_{k\neq{i,j}} \min\left(A^{(n)}_{ik},A^{(n)}_{jk}\right)-(n-2)p^{2}\right|\geq n^{\frac{2}{3}}p^{2}\right)\leq 2\exp^\frac{{-2\left(n^{2/3}p^{2}\right)^{2}}}{n-2} \leq  2\exp^{-2p^{4}n^{1/3}}. 
\end{align*}

\noindent By the union bound

\begin{align*}
    P\left(\left|\sum_{k\neq{i,j}} \min\left(A^{(n)}_{ik},A^{(n)}_{jk}\right)-(n-2)p^{2}\right|< n^{\frac{2}{3}}p^{2},\quad\forall{i\neq{j}}\right)\geq 1- 2n^{2}\exp^{-2p^{4}n^{1/3}}. 
\end{align*}

\noindent By the Borel–Cantelli lemma, with probability $1$, there exists a $\bar{n}_2$ such that for all $n\geq\bar{n}_2$, and for all $i\neq{j}$,

\begin{align}\label{second_bound_invariance_property}
  \left|\sum_{k\neq i,j}\min\left(A^{(n)}_{ik},A^{(n)}_{jk}\right)-(n-2)p^{2}\right|\leq n^{\frac{2}{3}}p^{2} 
  \implies  \sum_{k=1}^{n} \min\left(A^{(n)}_{ik},A^{(n)}_{jk}\right)\geq \sum_{k\neq i,j}\min\left(A^{(n)}_{ik},A^{(n)}_{jk}\right) \geq p^2(n-n^{2/3}-2).
\end{align}

\noindent Putting \eqref{first_bound_invariance_property} and \eqref{second_bound_invariance_property} together, with probability $1$ there exists a $\bar{n} = \max\{\bar{n}_1,\bar{n}_2\}$ such that 

\begin{align*}
  \frac{\sum_{k=1}^{n}A^{(n)}_{ik}+\sum_{k=1}^{n}A^{(n)}_{jk}}{\sum_{k=1}^{n} \min\left(A^{(n)}_{ik},A^{(n)}_{jk}\right)}\leq\frac{2(n+n^{2/3})}{p(n-2-n^{2/3})}  \text{ $\forall{i\neq{j}}$.}
\end{align*}

\noindent Then, by assumption, 
\begin{equation}\label{assumed_alpha_condition}
\frac{\cos^{2} (\beta)}{\sin^{}(\beta)} > \frac{2}{p} \left(\frac{1+\frac{1}{n^{1/3}}}{1-\frac{1}{n^{1/3}}-\frac{2}{n}}\right) = \frac{2(n+n^{2/3})}{p(n-2-n^{2/3})}\geq\frac{\sum_{k=1}^{n}A^{(n)}_{ik}+\sum_{k=1}^{n}A^{(n)}_{jk}}{\sum_{k=1}^{n} \min\left(A^{(n)}_{ik},A^{(n)}_{jk}\right)},\text{ $\forall{i\neq{j}}$.}
\end{equation}

\noindent Since $\cos(\beta)\sin(\frac{\pi}{2}-\beta)=\cos^{2}(\beta)$ and $\sin(\beta)>0$, 

\begin{align}\label{sufficient_condition_invariance_Bullo1}
\frac{\cos^{}(\beta)\sin^{}(\frac{\pi}{2}-\beta)}{\sin^{}(\beta)} > \frac{\sum_{k=1}^{n}A^{(n)}_{ik}+\sum_{k=1}^{n}A^{(n)}_{jk}}{\sum_{k=1}^{n} \min\left(A^{(n)}_{ik},A^{(n)}_{jk}\right)}, \quad \forall{i\neq{j}},
\end{align}

\noindent which yields  
\begin{align}\label{sufficient_condition_invariance_Bullo2}
 -\cos^{}(\beta)\sin\left(\frac{\pi}{2}-\beta\right)\sum_{k=1}^{n} \min\left(A^{(n)}_{ik},A^{(n)}_{jk}\right) + \sin^{}(\beta) \left(\sum_{k=1}^{n}A^{(n)}_{ik}+\sum_{k=1}^{n}A^{(n)}_{jk}\right)<{0}\quad\forall{i\neq{j}}. 
\end{align}
It follows from the proof of Theorem 4.3 of \cite{Bullo} that there exists $\gamma <\frac{\pi}{2}-\beta$ such that, if $|{\theta}_i^{n}(0)-{\theta}_j^{n}(0)|<\gamma \quad{\forall{i,j}}$, \eqref{sufficient_condition_invariance_Bullo2} implies that $|{\theta}_i^{n}(t)-{\theta}_j^{n}(t)|<\gamma \quad \forall{}i,j$ for all $t\geq{0}$, as desired. 
\end{proof}

\section*{Acknowledgments}
We would like to thank a few members of Cornell University's mathematics community for helpful conversations at varying stages in this work: Shawn Ong, Alex Townsend, and Martin Kassabov. S. V. Nagpal thanks the NSF Research Training Group Grant: Dynamics, Probability, and PDEs in Pure and Applied Mathematics, DMS-1645643 for partially funding this work. G. G. Nair thanks the Hausdorff Research Institute for Mathematics in Bonn for its hospitality during the Trimester Program, Mathematics for Complex Materials funded by the Deutsche Forschungsgemeinschaft (DFG, German Research Foundation) under Germany’s Excellence Strategy– EXC-2047/1– 390685813. F. Parise acknowledges that this material is based upon work supported by the National Science Foundation CAREER Grant No. 2340289.

\bibliographystyle{siamplain.bst}
\bibliography{references}
\end{document}